\documentclass[11pt]{article}

\usepackage[english]{babel}
\usepackage[letterpaper,top=2.2cm,bottom=2.2cm,left=3cm,right=3cm]{geometry}

\usepackage{amsmath,amssymb,amsthm,mathtools}
\usepackage{bm}
\usepackage{graphicx}
\usepackage{subcaption}
\usepackage{placeins}
\usepackage{enumitem}
\usepackage{algorithm}
\usepackage{algpseudocode}
\usepackage[colorlinks=true,allcolors=blue]{hyperref}

\theoremstyle{plain}
\newtheorem{theorem}{Theorem}[section]
\newtheorem{lemma}[theorem]{Lemma}

\newtheorem{corollary}[theorem]{Corollary}
\theoremstyle{definition}
\newtheorem{definition}[theorem]{Definition}
\theoremstyle{remark}
\newtheorem{remark}[theorem]{Remark}

\newcommand{\RR}{\mathbb{R}}

\newcommand{\M}{\mathfrak{M}}

\newcommand{\LC}{\mathrm{LC}}

\title{Compressed Sensing with Quantized Tensor Trains (QTTs)}
\author{%
  Jingchun Shao\\[0.35em]
  {\small Department of Mathematics, Texas A\&M University}\\
  {\small College Station, TX 77843, USA}\\
  {\small Email: \href{mailto:jingchun.shao@tamu.edu}{\nolinkurl{jingchun.shao@tamu.edu}}}%
}
\date{}

\begin{document}
\maketitle

\begin{abstract}
The storage and recovery of a vector of length \(2^d\) can become
prohibitively expensive as \(d\) grows, a manifestation of the curse of
dimensionality. For certain structured functions and multiscale quantities,
the resulting discretized vectors admit compact quantized tensor train (QTT)
representations after binary tensorization: a length-\(2^d\) vector is reshaped
into a \(d\)-way binary tensor with low tensor-train (TT)
rank. Motivated by this
construction, we study compressed sensing of tensorized signals that combine
bounded TT rank with sparse low-order interaction structure, using randomly
sampled Walsh--Hadamard measurements. For this structured model class, we
establish a uniform
restricted isometry property and derive a measurement condition guaranteeing
uniqueness of noiseless recovery. We also propose a structure-aware
initializer for the alternating linear scheme (ALS), obtained by projecting
the adjoint backprojection onto the low-order interaction space before
applying TT-SVD. We prove a uniform initialization error bound whose
measurement requirement, for fixed structural parameters, grows polynomially
with the tensor order \(d\) rather than with the ambient dimension \(2^d\).
Numerical experiments support the predicted RIP and initialization scaling
and demonstrate more reliable ALS recovery.

\end{abstract}

\noindent\textbf{Keywords:}
quantized tensor trains, compressed sensing,
Walsh--Hadamard measurements, restricted isometry property,
low-order interactions, alternating linear schemes

\section{Introduction}

Recovering a high-dimensional signal from a limited number of measurements is
a central problem in inverse problems, signal processing, and scientific
computing. Without additional structural assumptions, unique recovery of a
general vector in $\RR^N$ requires at least $N$ linear measurements. Classical
compressed sensing reduces this burden by assuming coordinate sparsity. If
$\mathbf{x}\in\RR^N$ has at most $q$ nonzero entries, one observes
\[
    \mathbf{y}
    =
    \mathbf{A}\mathbf{x}
    +
    \boldsymbol{\eta},
\]
where $\mathbf{A}\in\RR^{m\times N}$, $m\ll N$, and
$\boldsymbol{\eta}\in\RR^m$ represents measurement noise or outliers. When
$\mathbf{A}$ satisfies a
restricted isometry property (RIP), sparse vectors can be recovered by
$\ell_1$ minimization or greedy
methods~\cite{candes2005decoding,candes2006robust,foucart2013mathematical}.
Dense Gaussian and Bernoulli matrices can achieve near-optimal measurement
rates, but storing and applying them becomes costly in large ambient
dimensions. Subsampled Fourier matrices are an important structured
alternative: they support fast matrix--vector multiplication through the fast
Fourier transform and arise naturally in frequency-domain sensing, notably
magnetic resonance imaging (MRI)~\cite{lustig2007sparse}. In the binary
setting considered here, the Walsh--Hadamard transform provides a natural
alternative: its recursive Kronecker structure likewise permits a butterfly
implementation with \(\mathcal{O}(N\log N)\) complexity and aligns naturally
with binary tensorization.

Let $N=2^d$ and let $\mathbf{x}^\star\in\RR^N$ be the unknown vector. Its
binary tensorization
\[
    \mathcal{X}^\star
    =
    \operatorname{ten}(\mathbf{x}^\star)
    \in\RR^{2\times\cdots\times2}
\]
is defined precisely in Section~\ref{subsec:notation}. A tensor-train (TT)
representation of this tensorized vector is called a quantized tensor train
(QTT).\footnote{Here, ``quantized'' refers to binary tensorization and is
distinct from scalar quantization of measurements; see Hou and
Ng~\cite{hou2025quantized} for tensor recovery from quantized modewise
measurements.} We assume that
$\mathcal{X}^\star$ has bounded TT rank and belongs to a sparse low-order
interaction model, as formalized in
Section~\ref{subsec:structured-recovery-model}. Each binary mode represents one
bit of the original vector index. For function samples on a grid with $2^d$
points, these bits can be associated with scales ranging from coarse to fine.
Low TT rank then models limited dependence across these
levels~\cite{shinaoka2023multiscale}. If the ranks are bounded by $r$, the QTT
storage is $\mathcal{O}(dr^2)$ rather than $\mathcal{O}(2^d)$; see
Section~\ref{subsec:tt-decomposition}.

Complementing the low-TT-rank prior, the sparse low-order interaction model
assumes that $\mathcal{X}^\star$ has a sparse expansion in components involving
only a few binary modes. This assumption is motivated by physical models
dominated by a small number of low-order
interactions~\cite{rabitz1999general,drautz2019atomic}. For a generic
tensorized vector, it represents structured sparsity among the binary index
variables. A closely related combination of sparse low-order interaction
structure and TT compressibility is used for density estimation by
Peng et al.~\cite{peng2024tensor}, while a related low-order cluster basis and
sparse-plus-low-rank compression are used for operator learning by Khoo et
al.~\cite{khoo2025fastoperator}. Together, the two assumptions combine
global QTT compressibility with sparse low-order interaction structure.

We acquire $m$ randomly selected Walsh--Hadamard measurements. Let
\[
    \mathbf{H}_2
    \coloneqq
    \frac{1}{\sqrt{2}}
    \begin{bmatrix}
        1 & 1\\
        1 & -1
    \end{bmatrix},
    \qquad
    \mathbf{H}
    \coloneqq
    \mathbf{H}_2^{\otimes d}
    \in\RR^{N\times N},
\]
and let $\mathbf{S}\in\RR^{m\times N}$ select coordinates independently and
uniformly with replacement. With the normalization used throughout the
analysis, the noiseless measurements are
\begin{equation}
    \mathbf{y}
    =
    \mathbf{A}\mathbf{x}^\star,
    \qquad
    \mathbf{A}
    \coloneqq
    \sqrt{\frac{N}{m}}\,
    \mathbf{S}\mathbf{H}.
\end{equation}
Because $\mathbf{H}_2$ is the normalized two-point discrete Fourier
transform, this measurement model can equivalently be viewed as random
sampling of the $d$-dimensional discrete Fourier transform of
$\mathcal{X}^\star$ on a $2\times\cdots\times2$ grid. Given $\mathbf{y}$ and
the sampled rows, our goal is to recover $\mathbf{x}^\star$, or equivalently
its tensorization $\mathcal{X}^\star$.

The present work is closely related to two lines of research: low-rank tensor recovery and tensor regression.

\paragraph{Tensor recovery.}
Through binary tensorization, the recovery problem above can be viewed as
low-rank tensor recovery from general linear measurements. Rauhut, Schneider,
and Stojanac~\cite{rauhut2016lowranktensor} introduced iterative
hard-thresholding algorithms for recovering low-rank tensors in several
formats and analyzed these algorithms under a tensor restricted isometry
property (TRIP). Their bounds also
cover partial Fourier maps combined with random sign flips. For tensor
recovery and other optimization problems in the TT format, Holtz, Rohwedder,
and Schneider~\cite{holtz2012alternating} developed the alternating linear
scheme (ALS), which is the recovery algorithm used in this work. An important
special case of low-rank tensor recovery is tensor completion, where the
measurement operator observes a subset of tensor entries.
Steinlechner~\cite{steinlechner2016riemannian} formulated fixed-TT-rank tensor
completion as Riemannian optimization, and Cai, Li, and
Xia~\cite{cai2022provable} subsequently developed a provable second-order
spectral initialization and established linear convergence under a nearly
optimal sampling condition.

More recently, for general linear measurements of low-TT-rank tensors, Qin,
Wakin, and Zhu~\cite{qin2024guaranteed} established local linear convergence
of factorized Riemannian gradient descent under TT-RIP, using the spectral
initializer
$\operatorname{TT\text{-}SVD}_{\mathbf{r}}(\mathcal{A}^*\mathbf{y})$.
Qin and Zhu~\cite{qin2025robust} later treated arbitrary outliers using an
$\ell_1$ loss and Riemannian subgradient methods. Our initializer uses the same
adjoint backprojection followed by TT-SVD, but inserts a projection onto the
order-$s$ interaction space between them, giving
$\operatorname{TT\text{-}SVD}_{\mathbf{r}}(
\mathcal{P}_{\mathcal{V}_s}(\mathcal{A}^*\mathbf{y}))$.
In our experiments, the added interaction-space projection yields lower
initialization error and more
reliable ALS recovery than the unprojected spectral initializer; it also
underlies the uniform initialization guarantee developed below.

\paragraph{Tensor regression.}
Using the tensor notation introduced in Section~2.1, let
$f\coloneqq\mathcal{H}(\mathcal{X}^\star)$. As explained in
Section~\ref{subsec:structured-recovery-model}, $f$ admits a sparse expansion
in the Walsh tensor-product basis, truncated to interactions of order at most
$s$. Moreover,
\[
    \mathbf{y}
    =
    \sqrt{\frac{N}{m}}\,
    \mathbf{S}\operatorname{vec}(f),
\]
so the observations are random point samples of $f$. This interpretation
connects our problem to sparse low-rank approximation of multivariate
functions from point evaluations. Chevreuil et
al.~\cite{chevreuil2015least} construct sparse canonical low-rank
tensor-product approximations from random function evaluations using
regularized least squares and greedy rank-one enrichment. In related work of analysis of variance (ANOVA) decompositions, sparse approximation from scattered samples has been studied using Fourier-based least-squares and regularization methods~\cite{potts2021approximation,bartel2022grouped}.

The coefficient tensor of a tensor-product expansion can itself be
represented and compressed in tensor-network formats; see,
e.g.,~\cite{trunschke2025weighted}. Particularly relevant is the block-sparse
TT regression framework of G{\"o}tte, Schneider, and
Trunschke~\cite{gotte2021blocksparse}. They represent the coefficients of a
multivariate polynomial tensor-product expansion in TT format and fit the
model from point evaluations using an ALS-type least-squares method. Thus,
both settings infer a TT-structured coefficient tensor from pointwise
observations of the corresponding tensor-product function. The structural
assumptions are different, however: G{\"o}tte et al.\ use a polynomial
tensor-product basis with block sparsity induced by polynomial degree,
whereas we use the Walsh/ANOVA cluster basis and further assume sparsity
among the active low-order interaction components. Their analysis focuses on
the resulting block-sparse TT representation and its complexity, while we
establish uniform RIP and noiseless uniqueness guarantees for subsampled
Walsh--Hadamard measurements and develop a structure-aware initializer for
ALS.

A closely related contemporaneous work is presented in Chapter~6 of Peng's
dissertation~\cite[Chapter~6]{peng2026fast}. Both works represent sparse
low-order interactions in an ANOVA tensor-product basis and combine this
structure with a TT representation, so their structural starting points are
closely aligned. At the level of the functional representation and
point-sampling observation model, our setting is the binary two-basis
specialization of Peng's framework. The methods and theoretical guarantees,
however, are different. Peng combines kernel ridge regression, TT
compression, coefficient-based feature sampling, and a reduced
sparse-regression refit. In contrast, we study direct recovery under a
randomly subsampled Walsh--Hadamard operator, establish a uniform RIP and
noiseless uniqueness guarantee for the joint structured model, and develop a
structure-aware initializer for fixed-rank ALS together with a uniform
initialization error bound.

\subsection{Contributions}
The contributions of this work are summarized as follows.
\begin{itemize}
    \item This work provides a compressed-sensing framework for recovering
    vectors whose binary tensorizations combine bounded TT rank with sparse
    low-order interaction structure.
    \item A uniform RIP is established for the randomly sampled
    Walsh--Hadamard operator on the resulting model class and is later used to
    prove uniqueness of noiseless recovery.
    \item A structure-aware initializer for ALS is proposed and shown to
    satisfy a uniform initialization error bound; for fixed structural
    parameters, the required number of measurements grows polynomially in $d$.
    \item Numerical experiments examine the predicted RIP and initialization
    scalings and compare the structure-aware initializer with alternative
    initializations in end-to-end ALS recovery.

\end{itemize}

\subsection{Organization}
The remainder of this paper is organized as follows. Section~2 introduces the
notation, preliminaries, and problem setup. Section~3 presents the ALS recovery
method and the proposed initialization. Section~4 establishes the structured
RIP and uniqueness of noiseless recovery. Section~5 analyzes the
structure-aware initialization. Section~6 reports the numerical experiments,
and Section~7 concludes the paper.

\section{Preliminaries}
\subsection{Notation}
\label{subsec:notation}
To distinguish the tensor formulation from its vectorized
representation, we use the typographical conventions summarized below.

\begin{center}
\begin{tabular}{lll}
\hline
Notation type & Objects & Examples \\
\hline
Lowercase italic & Scalars
    & $d,m,N,r$ \\
Bold lowercase & Vectors
    & $\mathbf{x},\mathbf{y}$ \\
Bold uppercase & Matrices
    & $\mathbf{A},\mathbf{H}$ \\
Calligraphic uppercase & Tensors and tensor operators
    & $\mathcal{X},\mathcal{A},\mathcal{H}$ \\
Fraktur uppercase & Model classes
    & $\mathfrak{M}_{r,s,k}(C)$ \\
\hline
\end{tabular}
\end{center}

We use $\mathbb{R}$ and $\mathbb{C}$ to denote the real and complex
number fields, respectively. For a positive integer $n$, we write
$[n]\coloneqq\{1,\ldots,n\}$. We use $\otimes$ for the Kronecker product
and~$\circ$ for the tensor outer product of vectors; their relation under
our vectorization convention is specified below.

\paragraph{Tensors and vectorization.}
The tensor order is denoted by $d$. Since every mode has size two, the
ambient vector dimension is $N\coloneqq 2^d$. For
$\mathcal{X}\in\mathbb{R}^{2\times\cdots\times2}$, we write
\begin{equation}
    \mathbf{x}
    \coloneqq
    \operatorname{vec}(\mathcal{X})
    \in
    \mathbb{R}^N
\end{equation}
for its vectorization. More precisely, for
$i_1,\ldots,i_d\in\{1,2\}$, we define
\begin{equation}
    \bigl[\operatorname{vec}(\mathcal{X})\bigr]_j
    =
    \mathcal{X}(i_1,\ldots,i_d),
    \qquad
    j
    =
    1+\sum_{k=1}^d(i_k-1)2^{k-1}.
\end{equation}
Thus, the first tensor index varies fastest. We denote the inverse
tensorization map by
\begin{equation}
    \operatorname{ten}
    \coloneqq
    \operatorname{vec}^{-1},
    \qquad
    \operatorname{ten}\!\left(
        \operatorname{vec}(\mathcal{X})
    \right)
    =
    \mathcal{X}.
\end{equation}
For matrices $\mathbf{A}=[a_{ij}]\in\mathbb{R}^{m\times n}$ and
$\mathbf{B}\in\mathbb{R}^{p\times q}$, their Kronecker product is the
block matrix
\begin{equation}
    \mathbf{A}\otimes\mathbf{B}
    \coloneqq
    [a_{ij}\mathbf{B}]_{i=1,\ldots,m;\,j=1,\ldots,n}
    \in
    \mathbb{R}^{mp\times nq}.
\end{equation}
For vectors $\mathbf{a}^{(k)}\in\mathbb{R}^{n_k}$,
$k=1,\ldots,d$, their tensor outer product is the $d$-way rank-one
tensor
\begin{equation}
    \mathcal{Z}
    \coloneqq
    \mathbf{a}^{(1)}
    \circ\cdots\circ
    \mathbf{a}^{(d)}
    \in
    \mathbb{R}^{n_1\times\cdots\times n_d},
    \qquad
    \mathcal{Z}(i_1,\ldots,i_d)
    =
    \prod_{k=1}^d a^{(k)}_{i_k}.
\end{equation}
Under the vectorization convention above,
\begin{equation}
    \operatorname{vec}(\mathcal{Z})
    =
    \mathbf{a}^{(d)}
    \otimes\cdots\otimes
    \mathbf{a}^{(1)}.
\end{equation}
Thus, $\circ$ constructs a multiway tensor, whereas $\otimes$ denotes
the corresponding product in vectorized coordinates. The reversed order
in the last display follows because the first tensor index varies
fastest.

For vectors $\mathbf{u},\mathbf{v}\in\mathbb{R}^n$, define the Euclidean
inner product by
$\langle\mathbf{u},\mathbf{v}\rangle_2
\coloneqq\mathbf{u}^{\mathsf T}\mathbf{v}$.
For
$\mathcal{X},\mathcal{Y}\in\mathbb{R}^{2\times\cdots\times2}$, the
Frobenius inner product is defined as
\begin{equation}
    \langle\mathcal{X},\mathcal{Y}\rangle_F
    \coloneqq
    \sum_{i_1=1}^2\cdots\sum_{i_d=1}^2
    \mathcal{X}(i_1,\ldots,i_d)
    \mathcal{Y}(i_1,\ldots,i_d).
\end{equation}
The corresponding Frobenius norm satisfies
\begin{equation}
    \|\mathcal{X}\|_F
    =
    \sqrt{\langle\mathcal{X},\mathcal{X}\rangle_F}
    =
    \|\operatorname{vec}(\mathcal{X})\|_2.
\end{equation}
For vectors and matrices, $\|\cdot\|_2$ denotes the Euclidean norm and
the spectral norm, respectively.

Using these inner products, the adjoint
$\mathcal{L}^*:\mathbb{R}^m\to\mathbb{R}^{2\times\cdots\times2}$ of a
linear operator
$\mathcal{L}:\mathbb{R}^{2\times\cdots\times2}\to\mathbb{R}^m$ is
defined by
\begin{equation}
    \langle\mathcal{L}(\mathcal{X}),\mathbf{y}\rangle_2
    =
    \langle\mathcal{X},\mathcal{L}^*(\mathbf{y})\rangle_F
\end{equation}
for all $\mathcal{X}$ and $\mathbf{y}$. For a real matrix $\mathbf{L}$,
the adjoint is its transpose: $\mathbf{L}^*=\mathbf{L}^{\mathsf T}$.

\paragraph{Walsh--Hadamard transform.}
Recall the normalized Walsh--Hadamard matrix $\mathbf{H}$ introduced above.
We denote its corresponding modewise tensor transform by $\mathcal{H}$, where
\begin{equation}
    \operatorname{vec}\!\left(\mathcal{H}(\mathcal{X})\right)
    =
    \mathbf{H}\operatorname{vec}(\mathcal{X}).
\end{equation}
Every entry of $\mathbf{H}$ has magnitude $2^{-d/2}=N^{-1/2}$.
Moreover, $\mathbf{H}$ is real, symmetric, and orthogonal, so
\begin{equation}
    \mathbf{H}^{-1}
    =
    \mathbf{H}^*
    =
    \mathbf{H},
    \qquad
    \mathcal{H}^{-1}
    =
    \mathcal{H}^*
    =
    \mathcal{H}.
\end{equation}

\subsection{Tensor-train decomposition and TT-SVD}
\label{subsec:tt-decomposition}

We first recall the TT representation and then describe its sequential
construction by TT-SVD~\cite{Oseledets2011}. The discussion allows general
mode sizes, although this paper focuses on $n_1=\cdots=n_d=2$.

\paragraph{Tensor-train representation.}
Let $\mathcal{X}\in\mathbb{R}^{n_1\times\cdots\times n_d}$. A TT
representation consists of cores
$\mathcal{G}_k\in\mathbb{R}^{r_{k-1}\times n_k\times r_k}$,
$k=1,\ldots,d$, with boundary ranks $r_0=r_d=1$. For $i_k=1,\ldots,n_k$,
write $\mathbf{G}_k(i_k)\coloneqq\mathcal{G}_k(:,i_k,:)
\in\mathbb{R}^{r_{k-1}\times r_k}$. Then
\[
\begin{aligned}
    \mathcal{X}(i_1,\ldots,i_d)
    &=
    \mathbf{G}_1(i_1)\mathbf{G}_2(i_2)\cdots\mathbf{G}_d(i_d) \\
    &=
    \sum_{\alpha_1=1}^{r_1}\cdots\sum_{\alpha_{d-1}=1}^{r_{d-1}}
    \mathcal{G}_1(1,i_1,\alpha_1)
    \mathcal{G}_2(\alpha_1,i_2,\alpha_2)
    \cdots
    \mathcal{G}_d(\alpha_{d-1},i_d,1).
\end{aligned}
\]
The matrix product is a scalar because $r_0=r_d=1$; we write the represented
tensor compactly as $\mathcal{X}=[\mathcal{G}_1,\ldots,\mathcal{G}_d]$.
In quantum many-body physics, the TT format is commonly referred to as a
matrix product state (MPS) representation~\cite{schollwock2011density}. When
the tensor is obtained by tensorizing a vector, its TT representation is
called a QTT representation, also known as a quantics tensor
train~\cite{oseledets2010qtt}.

For $k=1,\ldots,d-1$, let $\mathbf{X}^{\langle k\rangle}$ be the unfolding
that groups the first $k$ tensor indices into the row index and the remaining
indices into the column index. Its dimensions and the TT ranks are
\[
\begin{aligned}
    \mathbf{X}^{\langle k\rangle}
    &\in
    \mathbb{R}^{(n_1\cdots n_k)\times(n_{k+1}\cdots n_d)},\\
    r_k
    &\coloneqq
    \operatorname{rank}\!\left(\mathbf{X}^{\langle k\rangle}\right),
    \qquad
    \operatorname{rank}_{\mathrm{TT}}(\mathcal{X})
    \coloneqq
    (r_1,\ldots,r_{d-1}).
\end{aligned}
\]
These unfolding ranks are the minimal internal dimensions of an exact TT
representation. For a rank vector $\mathbf{r}=(r_1,\ldots,r_{d-1})$, the
inequality $\operatorname{rank}_{\mathrm{TT}}(\mathcal{X})\leq\mathbf{r}$
is understood componentwise; a scalar bound $r$ means that every TT rank is
at most $r$.

The TT representation stores $\sum_{k=1}^d n_k r_{k-1}r_k$ parameters rather
than the $\prod_{k=1}^d n_k$ entries of the full tensor. If $n_k\leq n$ and
$r_k\leq r$, this is $\mathcal{O}(dnr^2)$ storage, or $\mathcal{O}(dr^2)$
in the binary setting.

\paragraph{TT-SVD.}
TT-SVD constructs the cores sequentially. At step $k=1,\ldots,d-1$, it
reshapes the current remainder into a matrix $M_k$, computes
$M_k=U_k\Sigma_kV_k^{\mathsf T}$, reshapes $U_k$ into the $k$th core, and
passes $\Sigma_kV_k^{\mathsf T}$ to the next step; the final remainder is
reshaped into $\mathcal{G}_d$. Without truncation, this produces an exact TT
representation with the unfolding ranks above.

Given a prescribed rank vector $\mathbf{r}=(r_1,\ldots,r_{d-1})$, truncated
TT-SVD truncates the $k$th SVD to rank at most $r_k$. The resulting
approximation, denoted by
$\operatorname{TT\text{-}SVD}_{\mathbf{r}}(\mathcal{X})$, has TT rank at
most $\mathbf{r}$ and satisfies the quasi-optimality estimate
\cite[Corollary~2.4]{Oseledets2011}:
\[
    \left\|
        \mathcal{X}
        -
        \operatorname{TT\text{-}SVD}_{\mathbf{r}}(\mathcal{X})
    \right\|_F
    \leq
    \sqrt{d-1}
    \inf_{\operatorname{rank}_{\mathrm{TT}}(\mathcal{Y})
        \leq\mathbf{r}}
    \|\mathcal{X}-\mathcal{Y}\|_F.
\]
This truncated map is used in the proposed algorithms.

\subsection{Low-order interaction space}
\label{subsec:low-order-interaction-space}
We first introduce a tensor-product basis for
$\mathbb{R}^{2\times\cdots\times2}$. Define the two local basis vectors
\begin{equation}
    \mathbf{e}_0
    \coloneqq
    \begin{bmatrix}1\\0\end{bmatrix},
    \qquad
    \mathbf{e}_1
    \coloneqq
    \begin{bmatrix}0\\1\end{bmatrix}.
\end{equation}
For each $S\subseteq[d]$, define
\begin{equation}
    \mathcal{E}_S
    \coloneqq
    \mathbf{e}^{(S)}_1\circ\cdots\circ\mathbf{e}^{(S)}_d,
    \quad
    \mathbf{e}^{(S)}_j
    \coloneqq
    \begin{cases}
        \mathbf{e}_1, & j\in S,\\
        \mathbf{e}_0, & j\notin S,
    \end{cases}
    \quad j=1,\ldots,d.
\end{equation}
The family $\{\mathcal{E}_S:S\subseteq[d]\}$ is the canonical orthonormal
basis of $\mathbb{R}^{2\times\cdots\times2}$, indexed by subsets
$S\subseteq[d]$ of the binary modes.

The connection with low-order interactions becomes explicit after applying
the Walsh--Hadamard transform. The local factors satisfy
\begin{equation}
    \mathbf{H}_2\mathbf{e}_0
    =
    \frac{1}{\sqrt{2}}
    \begin{bmatrix}1\\1\end{bmatrix},
    \qquad
    \mathbf{H}_2\mathbf{e}_1
    =
    \frac{1}{\sqrt{2}}
    \begin{bmatrix}1\\-1\end{bmatrix}.
\end{equation}
Since $\mathcal{H}$ acts modewise,
\begin{equation}
    \mathcal{H}\mathcal{E}_S
    =
    \left(\mathbf{H}_2\mathbf{e}^{(S)}_1\right)
    \circ\cdots\circ
    \left(\mathbf{H}_2\mathbf{e}^{(S)}_d\right).
\end{equation}
The transformed tensor $\mathcal{H}\mathcal{E}_S$ is the basis function
associated with the $S$-interaction in the ANOVA decomposition under the uniform product measure. Indeed,
$\mathbf{H}_2\mathbf{e}_0$ is constant, whereas
$\mathbf{H}_2\mathbf{e}_1$ has zero average over the two binary values.
Therefore, $\mathcal{H}\mathcal{E}_S$ is constant outside $S$ and has zero
marginal mean in $S$, as required for a pure
$S$-interaction~\cite{takemura1983tensor,Owen2003}.

\begin{definition}[Order-$s$ interaction space]
\label{def:low-order-interaction-space}
For $0\leq s\leq d$, define
\begin{equation}
    \mathcal{V}_s
    \coloneqq
    \left\{
        \sum_{\substack{S\subseteq[d]\\|S|\leq s}}
        c_S\mathcal{E}_S
        \;\middle|\;
        c_S\in\mathbb{R}
    \right\}.
\end{equation}
\end{definition}
Because the cluster basis tensors are orthonormal, the expansion in the
definition is unique. In the tensor domain, $\mathcal{V}_s$ contains only
cluster-basis coefficients indexed by sets of cardinality at most $s$; in the
spatial domain, $\mathcal{H}(\mathcal{V}_s)$ is precisely the order-$s$
truncated ANOVA space. Such low-order interaction
structure arises in many physical and scientific applications, where
high-dimensional functions are governed predominantly by interactions
among small subsets of variables~\cite{rabitz1999general}. This construction
is also closely related to the low-order cluster bases used in the atomic
cluster expansion~\cite{drautz2019atomic} and in recent work on tensor-train
density estimation~\cite{peng2024tensor} and operator
learning~\cite{khoo2025fastoperator}.

For the vectorized formulation, let
\begin{equation}
    V_s
    \coloneqq
    \operatorname{vec}(\mathcal{V}_s)
    \subseteq
    \mathbb{R}^{2^d}.
\end{equation}
We denote the orthogonal projectors onto the tensor and vectorized spaces
by
\begin{equation}
    \mathcal{P}_{\mathcal{V}_s}:
    \mathbb{R}^{2\times\cdots\times2}
    \rightarrow
    \mathcal{V}_s,
    \qquad
    P_{V_s}:
    \mathbb{R}^{2^d}
    \rightarrow
    V_s,
\end{equation}
respectively. They are related through vectorization:
\begin{equation}
    \operatorname{vec}\!\left(
        \mathcal{P}_{\mathcal{V}_s}\mathcal{X}
    \right)
    =
    P_{V_s}\operatorname{vec}(\mathcal{X}).
\end{equation}

\begin{remark}[Dimension of the interaction space]
\label{rem:interaction-space-dimension}
Since the basis tensors are indexed by subsets of cardinality at most
$s$,
\begin{equation}
    D_s
    \coloneqq
    \dim(\mathcal{V}_s)
    =
    \dim(V_s)
    =
    \sum_{\ell=0}^s\binom{d}{\ell}.
\end{equation}
For $1\leq s\leq d$,
\begin{equation}
    D_s
    \leq
    \left(\frac{ed}{s}\right)^s,
\end{equation}
which is the standard combinatorial bound; see, e.g.,~\cite{foucart2022mathematical}.
Thus, for fixed $s$, the dimension grows polynomially in $d$, in
contrast to the full dimension $2^d$.
\end{remark}

\subsection{Structured recovery model}
\label{subsec:structured-recovery-model}
We now formalize the structured recovery problem. The unknown vector and its
binary tensorization are
\begin{equation}
    \mathbf{x}^\star
    \in
    \mathbb{R}^{N},
    \qquad
    \mathcal{X}^\star
    \coloneqq
    \operatorname{ten}(\mathbf{x}^\star)
    \in
    \mathbb{R}^{2\times\cdots\times2}.
\end{equation}
Binary tensorization is an isometric reindexing, so recovering
$\mathbf{x}^\star$ is equivalent to recovering $\mathcal{X}^\star$. We
therefore formulate the structural model and recovery algorithm in tensor form,
where TT rank and sparse low-order interaction structure are explicit, and
pass to vector form only for sampled Hadamard-row calculations. After
recovering $\mathcal{X}^\star$, we obtain the original vector via
$\mathbf{x}^\star=\operatorname{vec}(\mathcal{X}^\star)$.

We impose two assumptions on the tensorization. First,
$\mathcal{X}^\star$ has
bounded TT rank:
\begin{equation}
    \operatorname{rank}_{\mathrm{TT}}(\mathcal{X}^\star)
    \leq r.
\end{equation}

Second, we assume that the cluster-basis expansion can be written as a sum of
at most $k$ components, each involving at most $s$ binary modes. Specifically,
we define
\begin{equation}
    \LC_k(s)
    \coloneqq
    \left\{
        \sum_{t=1}^{k}\mathcal{X}_{S_t}
        \;\middle|\;
        \begin{array}{l}
            S_t\subseteq[d],\quad |S_t|\leq s,\\[2pt]
            \mathcal{X}_{S_t}
            \in
            \operatorname{span}\{\mathcal{E}_I:I\subseteq S_t\},
            \quad t=1,\ldots,k
        \end{array}
    \right\}.
\end{equation}
Each $\mathcal{X}_{S_t}$ is supported on coordinates indexed by
$I\subseteq S_t$, so it has at most $2^{|S_t|}\leq2^s$ nonzero entries. Hence
every tensor in $\LC_k(s)$ has at most $k2^s$ nonzero cluster-basis
coefficients, and $\LC_k(s)$ is a sparse low-order
model contained in $\mathcal{V}_s$. The component decomposition need not be unique because these coordinate
supports may overlap.

Combining the two assumptions, we define
the model class
\begin{equation}
    \M_{r,s,k}(C)
    \coloneqq
    \left\{
        \mathcal{X}
        \in
        \mathbb{R}^{2\times\cdots\times2}:
        \operatorname{rank}_{\mathrm{TT}}(\mathcal{X})\leq r,\
        \mathcal{X}\in\LC_k(s),\
        \|\mathcal{X}\|_F\leq C
    \right\}.
\end{equation}
Here, $r$ bounds the TT ranks; $s$ and $k$ separately control the maximum
interaction order and the number of active interaction components; and
$C$ bounds the signal scale through the Frobenius norm.

We now formalize the measurement model. Draw $\nu_1,\ldots,\nu_m$
independently and uniformly from $[N]$ with replacement, and define
$\mathbf{S}\in\RR^{m\times N}$ by
\begin{equation}
    [\mathbf{S}\mathbf{z}]_\ell
    \coloneqq
    z_{\nu_\ell},
    \qquad
    \mathbf{z}\in\RR^N,\qquad \ell\in[m].
\end{equation}
For $\mathcal{Z}\in\RR^{2\times\cdots\times2}$, set
$\mathcal{S}(\mathcal{Z})\coloneqq
\mathbf{S}\operatorname{vec}(\mathcal{Z})$ and define
\begin{equation}
    \mathbf{A}
    \coloneqq
    \sqrt{\frac{N}{m}}\,\mathbf{S}\mathbf{H},
    \qquad
    \mathcal{A}(\mathcal{Z})
    \coloneqq
    \mathbf{A}\operatorname{vec}(\mathcal{Z})
    =
    \sqrt{\frac{N}{m}}\,
    \mathcal{S}\bigl(\mathcal{H}(\mathcal{Z})\bigr).
\end{equation}
To simplify the subsequent analysis, rescale the Hadamard rows to have
unit-magnitude entries:
\begin{equation}
    \mathbf{h}_j
    \coloneqq
    \sqrt{N}\,\mathbf{H}_{j,:}^{\mathsf T}
    \in\{\pm1\}^N,
    \qquad j\in[N].
\end{equation}
Thus, for $\mathbf{z}=\operatorname{vec}(\mathcal{Z})$,
\begin{equation}
    \label{eq:hadamard-row-measurement}
    [\mathcal{A}(\mathcal{Z})]_\ell
    =
    [\mathbf{A}\mathbf{z}]_\ell
    =
    \frac{1}{\sqrt{m}}
    \left\langle\mathbf{h}_{\nu_\ell},\mathbf{z}\right\rangle_2,
    \qquad \ell\in[m].
\end{equation}
The normalized noiseless model is
\begin{equation}
    \mathbf{y}
    =
    \mathbf{A}\mathbf{x}^\star
    =
    \mathcal{A}(\mathcal{X}^\star).
\end{equation}
The scaling makes $\mathbf{A}$ isotropic, since
$\mathbb{E}[\mathbf{S}^*\mathbf{S}]=(m/N)\mathbf{I}_N$ and hence
$\mathbb{E}[\mathbf{A}^*\mathbf{A}]=\mathbf{I}_N$.
Given $\mathbf{y}$ and the sampled indices $\nu_1,\ldots,\nu_m$, the
objective is to recover $\mathcal{X}^\star\in\M_{r,s,k}(C)$.

\section{Algorithm}

\subsection{Alternating linear scheme}
\label{sec:iter-opt}

We recover the tensorization $\mathcal{X}^\star$ in the TT format using the
alternating linear scheme (ALS). For a prescribed TT-rank vector
$\mathbf{r}=(r_1,\ldots,r_{d-1})$, write the current iterate as
\begin{equation}
    \mathcal{X}
    =
    [\mathcal{G}_1,\ldots,\mathcal{G}_d],
    \qquad
    \mathcal{G}_j
    \in
    \mathbb{R}^{r_{j-1}\times n_j\times r_j},
    \qquad
    r_0=r_d=1.
 \end{equation}
In the binary setting considered here, $n_j=2$ for every $j$. The
fixed-rank least-squares problem is
\begin{equation}
    \min_{\mathcal{G}_1,\ldots,\mathcal{G}_d}
    \frac{1}{2}
    \left\|
        \mathcal{A}
        \bigl([\mathcal{G}_1,\ldots,\mathcal{G}_d]\bigr)
        -
        \mathbf{y}
    \right\|_2^2.
\end{equation}

ALS updates one core while holding all other cores fixed. When updating
the $q$-th core, define the linear insertion map
\begin{equation}
    \mathcal{L}_q(\mathcal{Z})
    \coloneqq
    [\mathcal{G}_1,\ldots,\mathcal{G}_{q-1},
      \mathcal{Z},
      \mathcal{G}_{q+1},\ldots,\mathcal{G}_d]
\end{equation}
and the induced core-to-measurement operator
\begin{equation}
    \mathcal{A}_q(\mathcal{Z})
    \coloneqq
    \mathcal{A}\bigl(\mathcal{L}_q(\mathcal{Z})\bigr).
\end{equation}
Because the TT contraction is linear in any single core, $\mathcal{A}_q$
is linear. The $q$-th core update is therefore
\begin{equation}
\label{eq:iter-opt}
    \min_{
        \mathcal{Z}
        \in
        \mathbb{R}^{r_{q-1}\times n_q\times r_q}
    }
    \frac{1}{2}
    \left\|
        \mathcal{A}_q(\mathcal{Z})
        -
        \mathbf{y}
    \right\|_2^2.
\end{equation}

Let $\mathbf{z}\coloneqq\operatorname{vec}(\mathcal{Z})$. The matrix
representation of $\mathcal{A}_q$ is the matrix
\begin{equation}
    \mathbf{A}_q
    \in
    \mathbb{R}^{m\times(r_{q-1}n_qr_q)}
    \qquad\text{such that}\qquad
    \mathcal{A}_q(\mathcal{Z})
    =
    \mathbf{A}_q\mathbf{z}.
\end{equation}
Consequently, \eqref{eq:iter-opt} reduces to
\begin{equation}
\label{eq:least-squares-form}
    \min_{\mathbf{z}}
    \frac{1}{2}
    \|\mathbf{A}_q\mathbf{z}-\mathbf{y}\|_2^2.
\end{equation}
Its normal equations are
\begin{equation}
    \mathbf{A}_q^*\mathbf{A}_q\mathbf{z}
    =
    \mathbf{A}_q^*\mathbf{y}.
\end{equation}
We solve each least-squares subproblem through its normal equations and reshape
the solution into the updated core. ALS then proceeds to the next core. One
complete update of all $d$ cores constitutes an ALS sweep.

\subsection{Structure-aware initialization}
\label{subsec:rip-initialization}

ALS can be sensitive to its initial TT cores and may converge to a poor
stationary point. We therefore construct a data-informed initialization
using the two structural assumptions of the model.

The construction is motivated by the restricted isometry property (RIP).
For tensors $\mathcal{X}$ in the structured model class,
\begin{equation}
    \|\mathcal{A}(\mathcal{X})\|_2^2
    =
    \left\langle
        \mathcal{A}^*\mathcal{A}(\mathcal{X}),\mathcal{X}
    \right\rangle_F
    \approx
    \|\mathcal{X}\|_F^2.
\end{equation}
Thus, heuristically, the quadratic form of
$\mathcal{A}^*\mathcal{A}$ is close to that of the identity on the model class.
Given $\mathbf{y}=\mathcal{A}(\mathcal{X}^\star)$, this motivates the
backprojection
\begin{equation}
    \mathcal{A}^*\mathbf{y}
    =
    \mathcal{A}^*\mathcal{A}(\mathcal{X}^\star)
\end{equation}
as an initial approximation to $\mathcal{X}^\star$.

Since $\mathcal{A}=\sqrt{N/m}\,\mathcal{S}\mathcal{H}$ and the normalized
Walsh--Hadamard transform is self-adjoint, its adjoint is
\begin{equation}
    \mathcal{A}^*
    =
    \sqrt{\frac{N}{m}}\,\mathcal{H}\mathcal{S}^*.
\end{equation}
The backprojection need not belong to the low-order interaction space, so
we project it onto $\mathcal{V}_s$:
\begin{equation}
    \widetilde{\mathcal{X}}_0
    \coloneqq
    \mathcal{P}_{\mathcal{V}_s}
    \bigl(\mathcal{A}^*\mathbf{y}\bigr)
    \in
    \mathcal{V}_s.
\end{equation}

We then apply truncated TT-SVD both to construct the TT representation
required by ALS and to impose the prescribed TT-rank bound:
\begin{equation}
    \mathcal{X}_{\mathrm{init}}
    =
    [\mathcal{G}^{(0)}_1,\ldots,\mathcal{G}^{(0)}_d]
    \coloneqq
    \operatorname{TT\text{-}SVD}_{\mathbf{r}}
    \bigl(\widetilde{\mathcal{X}}_0\bigr),
    \qquad
    \operatorname{rank}_{\mathrm{TT}}
    (\mathcal{X}_{\mathrm{init}})
    \leq
    \mathbf{r}.
\end{equation}
The resulting cores initialize ALS. The procedure is summarized in
Algorithm~\ref{alg:rip-init}.

\begin{algorithm}[H]
\caption{Structure-aware initialization}
\label{alg:rip-init}
\begin{algorithmic}[1]
\Statex \textbf{Input:} Measurements $\mathbf{y}$, adjoint measurement
operator $\mathcal{A}^*$, interaction order $s$, and target TT-rank
vector $\mathbf{r}$
\State $\widetilde{\mathcal{X}}_0
    \gets
    \mathcal{P}_{\mathcal{V}_s}(\mathcal{A}^*\mathbf{y})$
    \Comment{Low-order projection}
\State $[\mathcal{G}^{(0)}_1,\ldots,\mathcal{G}^{(0)}_d]
    \gets
    \operatorname{TT\text{-}SVD}_{\mathbf{r}}
    (\widetilde{\mathcal{X}}_0)$
    \Comment{TT representation with prescribed ranks}
\Statex \textbf{Output:} Initial TT cores
$\mathcal{G}^{(0)}_1,\ldots,\mathcal{G}^{(0)}_d$
\end{algorithmic}
\end{algorithm}

Since ALS keeps the prescribed ranks fixed, the end-to-end recovery
experiments use the oracle choice
\(\mathbf{r}=\operatorname{rank}_{\mathrm{TT}}(\mathcal{X}^\star)\).
The initialization-scaling experiments instead use the uniform rank bound
\(r_j=k\), \(j=1,\ldots,d-1\).

\section{Restricted isometry and uniqueness}

\subsection{RIP for the structured model class}
\label{sec:rip}

We first prove concentration for a fixed tensor, using the Hadamard-row
representation in \eqref{eq:hadamard-row-measurement}, and then extend the
result uniformly by a covering argument.

\begin{lemma}[Fixed-tensor concentration]
\label{lem:fixed-rip}
Let $\mathcal{X}\in\LC_k(s)\setminus\{0\}$ be fixed. For every
$\varepsilon\in(0,1)$,
\begin{equation}
    \mathbb{P}
    \left(
        \left|
            \|\mathcal{A}(\mathcal{X})\|_2^2
            -\|\mathcal{X}\|_F^2
        \right|
        \geq
        \varepsilon\|\mathcal{X}\|_F^2
    \right)
    \leq
    2\exp\left(-\frac{m\varepsilon^2}{4k2^s}\right).
\end{equation}
\end{lemma}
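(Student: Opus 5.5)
Write $\mathbf{x}=\operatorname{vec}(\mathcal{X})$. By \eqref{eq:hadamard-row-measurement},
\[
\|\mathcal{A}(\mathcal{X})\|_2^2=\frac1m\sum_{\ell=1}^m Z_\ell,
\qquad
Z_\ell\coloneqq\langle\mathbf{h}_{\nu_\ell},\mathbf{x}\rangle_2^2 .
\]
The $Z_\ell$ are i.i.d. because the indices $\nu_\ell$ are drawn independently. Since the rows of $\mathbf{H}$ are orthonormal, each has mean
\[
\mathbb{E}Z_\ell=\frac1N\sum_{j=1}^N N\,(\mathbf{H}\mathbf{x})_j^2=\|\mathbf{x}\|_2^2=\|\mathcal{X}\|_F^2 .
\]
So the statement is a concentration bound for an empirical mean of bounded, nonnegative i.i.d. variables. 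I will apply Hoeffding's inequality, which gives exactly the displayed form $2\exp(-c\,m\varepsilon^2)$.

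The key step is a uniform bound on $Z_\ell$ from the sparsity of $\LC_k(s)$. As noted after the definition of $\LC_k(s)$, every $\mathcal{X}\in\LC_k(s)$ has at most $k2^s$ nonzero cluster-basis coefficients. Because $\mathcal{E}_S$ is the canonical basis, $\mathbf{x}$ has at most $k2^s$ nonzero entries. Each $\mathbf{h}_j$ has $\pm1$ entries, so Cauchy--Schwarz on the support gives
\[
0\le Z_\ell\le k2^s\|\mathbf{x}\|_2^2=k2^s\|\mathcal{X}\|_F^2 .
\]
Normalizing by $\|\mathcal{X}\|_F^2$ (which is nonzero), the variables $Y_\ell=Z_\ell/\|\mathcal{X}\|_F^2$ lie in $[0,K]$ with $K=k2^s$ and have mean $1$.

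Hoeffding's inequality for variables in $[0,K]$ gives
\[
\mathbb{P}\Bigl(\Bigl|\tfrac1m\sum_{\ell=1}^m Y_\ell-1\Bigr|\ge\varepsilon\Bigr)\le 2\exp\!\left(-\frac{2m\varepsilon^2}{K^2}\right).
\]
This has $K^2$ in the denominator, whereas the claim has $K$, so plain Hoeffding is too weak. Closing this gap is the main obstacle. I will use Bernstein's inequality, which exploits the variance:
\[
\operatorname{Var}(Y_\ell)\le\mathbb{E}Y_\ell^2\le K\,\mathbb{E}Y_\ell=K,
\qquad
|Y_\ell-1|\le K .
\]
Bernstein then gives
\[
2\exp\!\left(-\frac{m\varepsilon^2/2}{K+K\varepsilon/3}\right)\le 2\exp\!\left(-\frac{3m\varepsilon^2}{8K}\right)\qquad\text{for }\varepsilon<1 .
\]
Since $3/8\ge1/4$, this is at most $2\exp(-m\varepsilon^2/(4K))$, which is the claimed bound.

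For a sharper treatment of the centered variables, I would instead use the one-sided bounds $Y_\ell-1\le K$ and $1-Y_\ell\le 1$ together with Bernstein's moment condition. Either route gives the stated constant $4k2^s$. In the write-up I will state the version of Bernstein's inequality used: for i.i.d. $W_\ell$ with $|W_\ell-\mathbb{E}W_\ell|\le b$ and variance at most $\sigma^2$, the tail is at most $2\exp(-mt^2/(2\sigma^2+2bt/3))$. I will then check the arithmetic with $\sigma^2=K$, $b=K$, $t=\varepsilon$.
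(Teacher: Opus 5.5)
Your proposal is correct and follows the paper's proof essentially step for step. Both arguments use the same i.i.d. decomposition $Z_\ell=\langle\mathbf{h}_{\nu_\ell},\mathbf{x}\rangle^2$ with mean $\|\mathcal{X}\|_F^2$, the Cauchy--Schwarz bound $Z_\ell\le k2^s\|\mathcal{X}\|_F^2$ from the support size, the variance bound $\mathbb{E}Z_\ell^2\le B\,\mathbb{E}Z_\ell$, and Bernstein's inequality with $\varepsilon<1$ to absorb the $(1+\varepsilon/3)$ factor; your Hoeffding detour and the one-sided-bound aside are unnecessary but harmless.
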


\begin{proof}
Let $\mathbf{x}=\operatorname{vec}(\mathcal{X})$ and define
\begin{equation}
    Z_\ell
    \coloneqq
    \left|
        \left\langle
            \mathbf{h}_{\nu_\ell},\mathbf{x}
        \right\rangle
    \right|^2,
    \qquad \ell=1,\ldots,m.
\end{equation}
By \eqref{eq:hadamard-row-measurement},
\begin{equation}
    \|\mathcal{A}(\mathcal{X})\|_2^2
    =
    \frac{1}{m}\sum_{\ell=1}^m Z_\ell.
\end{equation}
The variables $Z_1,\ldots,Z_m$ are independent and identically
distributed. Since $\mathbf{H}$ is orthogonal,
\begin{equation}
    \mathbb{E}Z_\ell
    =
    \frac{1}{N}
    \sum_{j=1}^{N}
    \left|
        \left\langle
            \mathbf{h}_j,\mathbf{x}
        \right\rangle
    \right|^2
    =
    \frac{1}{N}
    \|\sqrt{N}\,\mathbf{H}\mathbf{x}\|_2^2
    =
    \|\mathbf{H}\mathbf{x}\|_2^2
    =
    \|\mathbf{x}\|_2^2
    =
    \|\mathcal{X}\|_F^2.
\end{equation}

Because $\mathcal{X}\in\LC_k(s)$, the support of $\mathbf{x}$ is
contained in a set $T\subseteq[N]$ with $|T|\leq k2^s$. Since the
entries of every $\mathbf{h}_j$ have magnitude one,
the Cauchy--Schwarz inequality gives
\begin{equation}
    Z_\ell
    =
    \left|
        \sum_{j\in T}
        (\mathbf{h}_{\nu_\ell})_j x_j
    \right|^2
    \leq
    |T|\,\|\mathbf{x}\|_2^2
    \leq
    k2^s\|\mathcal{X}\|_F^2.
\end{equation}
Set $B\coloneqq k2^s\|\mathcal{X}\|_F^2$. Then
$0\leq Z_\ell\leq B$ and
$|Z_\ell-\mathbb{E}Z_\ell|\leq B$. Moreover,
\begin{equation}
    \operatorname{Var}(Z_\ell)
    \leq
    \mathbb{E}Z_\ell^2
    \leq
    B\,\mathbb{E}Z_\ell
    =
    k2^s\|\mathcal{X}\|_F^4.
\end{equation}
Bernstein's inequality applied to
$Z_\ell-\mathbb{E}Z_\ell$ therefore yields
\begin{align}
    &\mathbb{P}
    \left(
        \left|
            \frac{1}{m}\sum_{\ell=1}^m Z_\ell
            -\|\mathcal{X}\|_F^2
        \right|
        \geq
        \varepsilon\|\mathcal{X}\|_F^2
    \right) \notag\\
    &\qquad\leq
    2\exp\left(
        -\frac{m\varepsilon^2}
        {2(1+\varepsilon/3)k2^s}
    \right)
    \leq
    2\exp\left(-\frac{m\varepsilon^2}{4k2^s}\right),
\end{align}
where the last inequality uses $\varepsilon<1$.
\end{proof}

We next recall the TT covering estimate used in the uniform argument.
It follows by covering a suitably orthogonalized collection of TT
cores; see, for example,~\cite{rauhut2016lowranktensor}.

\begin{lemma}[Covering number for TT tensors]
\label{lem:tt-cover}
Let $d\geq2$, $r\geq1$, and $C>0$, and define
\begin{equation}
    \mathcal{G}_r(C)
    \coloneqq
    \left\{
        \mathcal{X}\in\RR^{2\times\cdots\times2}:
        \operatorname{rank}_{\mathrm{TT}}(\mathcal{X})\leq r,\
        \|\mathcal{X}\|_F\leq C
    \right\}.
\end{equation}
There exists an absolute constant $K>0$ such that, for every
$0<\eta\leq C$,
\begin{equation}
    \mathcal{N}
    \left(
        \mathcal{G}_r(C),
        \|\cdot\|_F,
        \eta
    \right)
    \leq
    \left(
        \frac{Kd\sqrt{r}\,C}{\eta}
    \right)^{2dr^2}.
\end{equation}
\end{lemma}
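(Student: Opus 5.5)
We will prove the covering estimate in Lemma~\ref{lem:tt-cover} by parametrizing $\mathcal{G}_r(C)$ through left-orthogonal TT cores and covering each core separately, following the argument of Rauhut, Schneider, and Stojanac. Every $\mathcal{X}$ with $\operatorname{rank}_{\mathrm{TT}}(\mathcal{X})\leq r$ admits, via TT-SVD without truncation (padding ranks up to $r$ with zeros), a representation $\mathcal{X}=[\mathcal{U}_1,\ldots,\mathcal{U}_{d-1},\mathcal{W}_d]$. Here each $\mathcal{U}_j\in\RR^{r\times2\times r}$ is left-orthogonal: its left unfolding $\mathbf{U}_j^{L}\in\RR^{2r\times r}$ satisfies $\|\mathbf{U}_j^{L}\|_2\leq1$. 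All norm information sits in the last core, so $\|\mathcal{W}_d\|_F=\|\mathcal{X}\|_F\leq C$. The first and last cores have boundary rank one, but embedding them in the $r\times2\times r$ format only enlarges the parameter count. The plan is to cover the set $\mathcal{O}$ of left-orthogonal cores in the operator norm $\|\cdot\|_{2}$ of the left unfolding, and the last-core ball in Frobenius norm. We then use the product of nets.

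\textbf{Step 1 (perturbation bound).} For two representations with left-orthogonal first $d-1$ cores, telescope:
\[
\mathcal{X}-\mathcal{X}'=\sum_{j=1}^{d}[\mathcal{U}'_1,\ldots,\mathcal{U}'_{j-1},\mathcal{U}_j-\mathcal{U}'_j,\mathcal{U}_{j+1},\ldots,\mathcal{U}_d].
\]
Contracting from the left, multiplication by a left-orthogonal unfolding does not increase Frobenius norm. Contracting from the right, the remaining partial product $[\mathcal{U}_{j+1},\ldots]$ has operator norm controlled by the last core. This gives
\[
\|\mathcal{X}-\mathcal{X}'\|_F\leq \sum_{j<d}\|\mathbf{U}_j^L-\mathbf{U}_j'^L\|_2\,C+\|\mathcal{W}_d-\mathcal{W}'_d\|_F .
\]
The delicate point is that the $j$th term needs the operator norm of $\mathbf{U}_j^L-\mathbf{U}_j'^L$ times the Frobenius norm of the right part. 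We arrange the bound so that the Frobenius norm lands on the right part, which has norm at most $C$. This is the step I expect to require the most care, in particular checking that the right partial products inherit the Frobenius bound $C$ from left-orthogonality.

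\textbf{Step 2 (nets).} The set $\mathcal{O}$ is contained in the operator-norm unit ball of $\RR^{2r\times r}$. Its Frobenius norm is at most $\sqrt{r}$, so a Frobenius $\tau$-net of the Frobenius ball of radius $\sqrt{r}$ is also an operator-norm $\tau$-net. By the volumetric bound this net has size at most $(3\sqrt{r}/\tau)^{2r^2}$. Take $\tau=\eta/(dC)$, and cover the last core's $C$-ball at scale $\eta/d$ with at most $(3dC/\eta)^{2r^2}$ points. Net points may fail to be left-orthogonal, so we use an $\eta/2$-cover with arbitrary points and convert to a cover with points inside the set at scale $\eta$ by the standard factor-two argument.

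\textbf{Step 3 (count).} Step 1 gives total error at most $d\cdot\eta/d=\eta$ (after adjusting constants). The total cardinality is at most
\[
\bigl(3d\sqrt{r}C/\eta\bigr)^{2r^2(d-1)}\bigl(3dC/\eta\bigr)^{2r^2}\leq\bigl(Kd\sqrt{r}\,C/\eta\bigr)^{2dr^2}.
\]
Here the assumption $\eta\leq C$ ensures every base is at least one, which lets us absorb the constants into $K$.
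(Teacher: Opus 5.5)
This is the argument the paper relies on. The paper gives no proof of its own and defers to Rauhut--Schneider--Stojanac, whose proof follows your scheme. The shared ingredients are the left-orthogonal parametrization with all the norm in the last core, per-core nets, and the telescoping bound that puts the operator norm on the perturbed core and the Frobenius norm on the right partial product. Your contractivity observation correctly bounds that right partial product by $\|\mathcal{W}_d\|_F\le C$. The count in Step~3 and the use of $\eta\le C$ to make every base at least one are also fine.

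The one step to repair is your treatment of non-orthogonal net points. In the $j$th telescoping term, the cores to the left of the difference are the \emph{net} cores $\mathcal{U}'_1,\ldots,\mathcal{U}'_{j-1}$, and Step~1 needs these to be contractions. Your scales ($\eta/2\to\eta$) suggest converting a final cover of $\mathcal{G}_r(C)$ into an internal one, but that does not address this. Step~1 must already hold for the cover being converted, and the lemma allows arbitrary centers anyway.

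The fix belongs at the core level, and either of two routes works. First, you can take a maximal Frobenius-$\tau$-separated subset of $\mathcal{O}$. It is a $\tau$-net lying inside $\mathcal{O}$ and satisfies the same volumetric bound $(1+2\sqrt{r}/\tau)^{2r^2}$. Second, you can observe that the net point assigned to $\mathcal{U}_j\in\mathcal{O}$ satisfies $\|\mathbf{U}_j^{\prime L}\|_2\le 1+\tau$. Then the left interface matrix has operator norm at most $(1+\tau)^{d-1}\le e^{\eta/C}\le e$, which only changes the absolute constant $K$.

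With either fix your argument is complete. Covering the operator-norm unit ball directly in the operator norm, as Rauhut--Schneider--Stojanac do, would even remove the $\sqrt{r}$ from the bound.
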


We now combine the fixed-tensor estimate with this covering bound.

\begin{theorem}[Uniform RIP over the structured tensor class]
\label{thm:uniform-rip}
Let $\varepsilon,\delta\in(0,1)$. If
\begin{equation}
    \label{eq:uniform-measure-bound}
    m
    \geq
    \frac{16k2^s}{\varepsilon^2}
    \left[
        \log\frac{2}{\delta}
        +
        2dr^2
        \log\left(
            \frac{16Kd\sqrt{r}\,k2^s}{\varepsilon}
        \right)
    \right],
\end{equation}
then, with probability at least $1-\delta$,
\begin{equation}
    \left|
        \|\mathcal{A}(\mathcal{X})\|_2^2
        -\|\mathcal{X}\|_F^2
    \right|
    \leq
    \varepsilon\|\mathcal{X}\|_F^2
    \qquad
    \text{for every }\mathcal{X}\in\M_{r,s,k}(C).
\end{equation}
\end{theorem}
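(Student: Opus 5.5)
By homogeneity it suffices to treat the unit sphere. Both constraints $\operatorname{rank}_{\mathrm{TT}}(\mathcal{X})\le r$ and $\mathcal{X}\in\LC_k(s)$ are invariant under scaling, and the claimed inequality is $2$-homogeneous. So I will prove $\bigl|\|\mathcal{A}(\mathcal{X})\|_2^2-1\bigr|\le\varepsilon$ for all $\mathcal{X}$ in
\[
\mathcal{T}\coloneqq\{\mathcal{X}:\operatorname{rank}_{\mathrm{TT}}(\mathcal{X})\le r,\ \mathcal{X}\in\LC_k(s),\ \|\mathcal{X}\|_F\le 1\}.
\]
This covers every $\mathcal{X}\in\M_{r,s,k}(C)$ after normalization. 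In fact I will bound the deviation by $\varepsilon$ on the unit-norm elements and by $\varepsilon\|\mathcal{X}\|_F^2$-type quantities on the net points. The constant $C$ then plays no role, which is consistent with the measurement bound being free of $C$.

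\textbf{Step 1 (net inside the model class).} I will apply Lemma~\ref{lem:fixed-rip}, which needs the net points to lie in $\LC_k(s)$, so I want an internal net. Since $\mathcal{T}\subseteq\mathcal{G}_r(1)$, Lemma~\ref{lem:tt-cover} at scale $\eta/2$ gives an external cover of $\mathcal{T}$. Replacing each center by a point of $\mathcal{T}$ within $\eta/2$ of it yields an internal $\eta$-net $\mathcal{N}_\eta\subseteq\mathcal{T}$ with
\[
|\mathcal{N}_\eta|\le\bigl(2Kd\sqrt r/\eta\bigr)^{2dr^2}.
\]
The factor $2$ can be absorbed into $K$ or into the constant $16$. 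I will take $\eta=\varepsilon/(16k2^s)$, which matches the logarithm in \eqref{eq:uniform-measure-bound}.

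\textbf{Step 2 (union bound).} Apply Lemma~\ref{lem:fixed-rip} with accuracy $\varepsilon/2$ to each nonzero net point. Each failure probability is at most $2\exp(-m\varepsilon^2/(16k2^s))$. By the union bound, the event
\[
\bigl|\|\mathcal{A}(\mathcal{X}')\|_2^2-\|\mathcal{X}'\|_F^2\bigr|\le\tfrac{\varepsilon}{2}\|\mathcal{X}'\|_F^2\quad\text{for all }\mathcal{X}'\in\mathcal{N}_\eta
\]
fails with probability at most $|\mathcal{N}_\eta|\cdot 2e^{-m\varepsilon^2/(16k2^s)}$. Taking logarithms, this is at most $\delta$ exactly when \eqref{eq:uniform-measure-bound} holds.

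\textbf{Step 3 (extension off the net).} This is the main obstacle, because no RIP is yet available on the differences $\mathcal{X}-\mathcal{X}'$. These differences are no longer of TT rank $r$, but they do lie in $\LC_{2k}(s)$. I will therefore use a deterministic bound instead of RIP. Any $\mathcal{Z}\in\LC_{2k}(s)$ has $\operatorname{vec}(\mathcal{Z})$ supported on at most $2k2^s$ coordinates. The Cauchy--Schwarz step from the proof of Lemma~\ref{lem:fixed-rip}, applied row by row, then gives
\[
\|\mathcal{A}(\mathcal{Z})\|_2^2\le 2k2^s\|\mathcal{Z}\|_F^2
\]
for every realization of the sampling.

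Now fix $\mathcal{X}\in\mathcal{T}$ with $\|\mathcal{X}\|_F=1$, and choose $\mathcal{X}'\in\mathcal{N}_\eta$ with $\|\mathcal{X}-\mathcal{X}'\|_F\le\eta$. Using
\[
\bigl|\|\mathcal{A}\mathcal{X}\|_2^2-\|\mathcal{A}\mathcal{X}'\|_2^2\bigr|\le\|\mathcal{A}(\mathcal{X}-\mathcal{X}')\|_2\,\bigl(\|\mathcal{A}\mathcal{X}\|_2+\|\mathcal{A}\mathcal{X}'\|_2\bigr),
\]
each factor is controlled by the deterministic bound: $\|\mathcal{A}(\mathcal{X}-\mathcal{X}')\|_2\le\sqrt{2k2^s}\,\eta$, and $\|\mathcal{A}\mathcal{X}\|_2,\|\mathcal{A}\mathcal{X}'\|_2\le\sqrt{k2^s}$. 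The right-hand side is therefore at most $2\sqrt2\,k2^s\eta\le\varepsilon/4$ by the choice of $\eta$. In addition, $\bigl|\|\mathcal{X}'\|_F^2-1\bigr|\le 2\eta\le\varepsilon/8$.

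Combining these via the triangle inequality with the net event $\bigl|\|\mathcal{A}\mathcal{X}'\|_2^2-\|\mathcal{X}'\|_F^2\bigr|\le\varepsilon/2$ gives
\[
\bigl|\|\mathcal{A}\mathcal{X}\|_2^2-1\bigr|\le\tfrac{\varepsilon}{2}+\tfrac{\varepsilon}{4}+\tfrac{\varepsilon}{8}<\varepsilon.
\]
Rescaling back to general $\mathcal{X}\in\M_{r,s,k}(C)$ finishes the proof. If the constants turn out tighter than sketched, I would first shrink $\eta$ by an absolute factor and absorb the change into $K$.
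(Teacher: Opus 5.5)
Your strategy is the same as the paper's proof. You build an internal net of the normalized model set from the external TT cover of Lemma~\ref{lem:tt-cover} at scale $\eta/2$. You take a union bound using Lemma~\ref{lem:fixed-rip} at accuracy $\varepsilon/2$. You control the off-net error with a deterministic Cauchy--Schwarz bound that uses only the support size $2k2^s$ of $\operatorname{vec}(\mathcal{X}-\mathcal{X}')$. Your factorization $\|\mathcal{A}(\mathcal{X}-\mathcal{X}')\|_2(\|\mathcal{A}\mathcal{X}\|_2+\|\mathcal{A}\mathcal{X}'\|_2)$ even gives $2\sqrt2\,k2^s\eta$, slightly better than the paper's row-wise $4k2^s\eta$.

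The problem is the constants, which matter because the theorem states explicit ones. With your choice $\eta=\varepsilon/(16k2^s)$, the net cardinality is bounded only by $(2Kd\sqrt r/\eta)^{2dr^2}=(32Kd\sqrt r\,k2^s/\varepsilon)^{2dr^2}$. So under \eqref{eq:uniform-measure-bound} the union bound gives failure probability at most $2^{2dr^2}\delta$, not $\delta$. Your claim that the bound is at most $\delta$ ``exactly when'' \eqref{eq:uniform-measure-bound} holds is an arithmetic slip. You cannot absorb the extra factor $2$ into $K$, since $K$ is the fixed constant of Lemma~\ref{lem:tt-cover} appearing in the statement. The $16$'s in the statement are likewise explicit.

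The repair is the paper's choice. Take $\eta=\varepsilon/(8k2^s)$, which reproduces exactly the logarithm $\log(16Kd\sqrt r\,k2^s/\varepsilon)$. Place the internal net inside the unit sphere $\{\|\mathcal{X}\|_F=1\}$ rather than the ball $\mathcal{T}$, so the term $\bigl|\|\mathcal{X}'\|_F^2-1\bigr|$ disappears. Your Step~3 bound then gives off-net error at most $\tfrac{\sqrt2}{4}\varepsilon<\tfrac{\varepsilon}{2}$, and the total deviation stays below $\varepsilon$. If you keep the ball with this $\eta$, you pick up an extra $2\eta=\varepsilon/(4k2^s)$, which overruns the $\varepsilon$ budget when $k2^s=1$.
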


\begin{proof}
Because scalar multiplication preserves both TT rank and membership in
$\LC_k(s)$, it is enough to work on the normalized structured set
\begin{equation}
    \mathbb{S}_{r,s,k}
    \coloneqq
    \left\{
        \mathcal{X}\in\RR^{2\times\cdots\times2}:
        \operatorname{rank}_{\mathrm{TT}}(\mathcal{X})\leq r,\
        \mathcal{X}\in\LC_k(s),\
        \|\mathcal{X}\|_F=1
    \right\}.
\end{equation}
Since $\mathbb{S}_{r,s,k}\subseteq\mathcal{G}_r(1)$, there exists an
$\eta$-net $\mathcal{N}_\eta\subseteq\mathbb{S}_{r,s,k}$ satisfying
\begin{equation}
    \label{eq:structured-net-cardinality}
    |\mathcal{N}_\eta|
    \leq
    \mathcal{N}\left(
        \mathcal{G}_r(1),\|\cdot\|_F,\frac{\eta}{2}
    \right)
    \leq
    \left(
        \frac{2Kd\sqrt{r}}{\eta}
    \right)^{2dr^2}.
\end{equation}
Fix $\mathcal{X}\in\mathbb{S}_{r,s,k}$ and choose
$\mathcal{X}^{(p)}\in\mathcal{N}_\eta$ such that
$\|\mathcal{X}-\mathcal{X}^{(p)}\|_F\leq\eta$. Let $\mathbf{x}$ and
$\mathbf{x}^{(p)}$ denote their vectorizations. The triangle inequality
gives
\begin{equation}
    \left|
        \|\mathcal{A}(\mathcal{X})\|_2^2-1
    \right|
    \leq
    \left|
        \|\mathcal{A}(\mathcal{X})\|_2^2
        -
        \|\mathcal{A}(\mathcal{X}^{(p)})\|_2^2
    \right|
    +
    \left|
        \|\mathcal{A}(\mathcal{X}^{(p)})\|_2^2-1
    \right|.
\end{equation}
The second term is the fixed-tensor RIP error at the net point. We
bound the first term deterministically.

\paragraph{Net approximation error.}
By \eqref{eq:hadamard-row-measurement},
\begin{equation}
\begin{aligned}
    &\left|
        \|\mathcal{A}(\mathcal{X})\|_2^2
        -\|\mathcal{A}(\mathcal{X}^{(p)})\|_2^2
    \right|\\
    &\quad\leq
    \frac{1}{m}\sum_{\ell=1}^m
    \left|
        \left\langle
            \mathbf{h}_{\nu_\ell},
            \mathbf{x}-\mathbf{x}^{(p)}
        \right\rangle
    \right|
    \left|
        \left\langle
            \mathbf{h}_{\nu_\ell},
            \mathbf{x}+\mathbf{x}^{(p)}
        \right\rangle
    \right|.
\end{aligned}
\end{equation}
Each of $\mathbf{x}$ and $\mathbf{x}^{(p)}$ has at most $k2^s$
nonzero entries. Their sum and difference are therefore supported on at
most $2k2^s$ coordinates. Moreover,
$\|\mathbf{x}+\mathbf{x}^{(p)}\|_2\leq2$ because both tensors belong to
$\mathbb{S}_{r,s,k}$. Thus, for every sampled row,
\begin{equation}
\begin{aligned}
    \left|
        \left\langle
            \mathbf{h}_{\nu_\ell},
            \mathbf{x}-\mathbf{x}^{(p)}
        \right\rangle
    \right|
    &\leq
    \sqrt{2k2^s}\,\eta,\\
    \left|
        \left\langle
            \mathbf{h}_{\nu_\ell},
            \mathbf{x}+\mathbf{x}^{(p)}
        \right\rangle
    \right|
    &\leq
    2\sqrt{2k2^s}.
\end{aligned}
\end{equation}
Consequently,
\begin{equation}
    \left|
        \|\mathcal{A}(\mathcal{X})\|_2^2
        -\|\mathcal{A}(\mathcal{X}^{(p)})\|_2^2
    \right|
    \leq
    4k2^s\eta.
\end{equation}

\paragraph{Choice of $\eta$ and union bound.}
Choose
\begin{equation}
    \eta
    \coloneqq
    \frac{\varepsilon}{8k2^s}.
\end{equation}
The net approximation error is then at most $\varepsilon/2$, and
\eqref{eq:structured-net-cardinality} gives
\begin{equation}
    |\mathcal{N}_\eta|
    \leq
    \left(
        \frac{16Kd\sqrt{r}\,k2^s}{\varepsilon}
    \right)^{2dr^2}.
\end{equation}

Lemma~\ref{lem:fixed-rip}, applied with accuracy $\varepsilon/2$ to
each $\mathcal{X}^{(p)}\in\mathcal{N}_\eta$, gives
\begin{equation}
    \mathbb{P}\left(
        \left|
            \|\mathcal{A}(\mathcal{X}^{(p)})\|_2^2-1
        \right|
        \geq
        \frac{\varepsilon}{2}
    \right)
    \leq
    2\exp\left(
        -\frac{m\varepsilon^2}{16k2^s}
    \right).
\end{equation}
Taking a union bound over $\mathcal{N}_\eta$, the failure probability
is at most
\begin{equation}
    2
    \left(
        \frac{16Kd\sqrt{r}\,k2^s}{\varepsilon}
    \right)^{2dr^2}
    \exp\left(
        -\frac{m\varepsilon^2}{16k2^s}
    \right).
\end{equation}
By \eqref{eq:uniform-measure-bound}, this probability is at most
$\delta$.

Therefore, with probability at least $1-\delta$, every net point
satisfies
\begin{equation}
    \left|
        \|\mathcal{A}(\mathcal{X}^{(p)})\|_2^2-1
    \right|
    \leq
    \frac{\varepsilon}{2}.
\end{equation}
Combining this with the deterministic approximation estimate gives, for
every $\mathcal{X}\in\mathbb{S}_{r,s,k}$,
\begin{equation}
    \left|
        \|\mathcal{A}(\mathcal{X})\|_2^2-1
    \right|
    \leq
    \frac{\varepsilon}{2}
    +
    \frac{\varepsilon}{2}
    =
    \varepsilon.
\end{equation}
Finally, for any nonzero $\mathcal{X}\in\M_{r,s,k}(C)$, the normalized
tensor $\mathcal{X}/\|\mathcal{X}\|_F$ belongs to
$\mathbb{S}_{r,s,k}$. Homogeneity gives the claimed estimate. The zero
tensor satisfies it trivially.

\end{proof}

\begin{remark}[Comparison with general sparse RIP bounds]
Let $q\coloneqq k2^s$. Then the vectorization of every tensor in
$\M_{r,s,k}(C)$ is $q$-sparse in $\mathbb{R}^N$. For fixed
$\varepsilon$ and $\delta$, using $d=\log_2N$ in
Theorem~\ref{thm:uniform-rip} gives $m=\mathcal{O}\!\left(
    qr^2\log N
    \log\left(\sqrt{r}\,q\log N\right)
\right)$.
For bounded TT rank, this becomes
\begin{equation}
    m
    =
    \mathcal{O}\!\left(
        q\log N\log(q\log N)
    \right).
\end{equation}

For comparison, if the tensor structure is ignored, classical RIP results for
arbitrary $q$-sparse vectors apply. Rudelson and Vershynin showed that, for an
orthogonal matrix with entries bounded by $O(N^{-1/2})$,
\begin{equation}
    m
    =
    \mathcal{O}\!\left(
        q\log N\log^2q\log(q\log N)
    \right)
    =
        \mathcal{O}(q\log^4 N)
\end{equation}
measurements suffice~\cite{rudelson2008sparse}. The normalized
Walsh--Hadamard matrix belongs to this class. For bounded-entry unitary
matrices with independent uniform row sampling, as used here, Haviv and Regev
sharpened the requirement to
\begin{equation}
    m
    =
    \mathcal{O}(q\log^2q\log N)
\end{equation}
measurements~\cite{haviv2017restricted}.
For bounded TT rank, our model-aware estimate removes the factor $\log^2q$
from the Rudelson--Vershynin bound. It does not
uniformly improve on the Haviv--Regev bound: the remaining factors are
$\log(q\log N)$ in our estimate and $\log^2q$ in theirs, so their ordering
depends on how $q$ scales with $N$. Nevertheless, for the smaller family
determined by the TT-rank and interaction constraints, a single-scale covering
argument---without a multiscale chaining construction---already yields a bound
of comparable polylogarithmic order.

\end{remark}

\begin{remark}[Extension to the full order-$s$ interaction space
$\mathcal{V}_s$]

In practice, the number $k$ of active low-order interaction components may be
unknown; one may only know that $\mathcal{X}\in\mathcal{V}_s$ has TT rank at
most $r$. For every $\mathcal{X}\in\mathcal{V}_s$, the support of
$\operatorname{vec}(\mathcal{X})$ is contained in the coordinates indexed by
$S\subseteq[d]$ with $|S|\le s$, and hence has cardinality at most
\[
    D_s=\sum_{\ell=0}^s \binom{d}{\ell}.
\]
Repeating the proof of Theorem~\ref{thm:uniform-rip} with this support bound
in place of $k2^s$, it is sufficient to require
\begin{equation}
    m \geq
    \frac{16D_s}{\varepsilon^2}
    \left[
        \log\frac{2}{\delta}
        +
        2dr^2
        \log\left(
            \frac{8Kd\sqrt{r}\,D_s}{\varepsilon}
        \right)
    \right].
\end{equation}
Using $D_s\leq(ed/s)^s$ from
Remark~\ref{rem:interaction-space-dimension}, it is enough to require
\begin{equation}
    m \geq
    \frac{16}{\varepsilon^2}
    \left(\frac{ed}{s}\right)^s
    \left[
        \log\frac{2}{\delta}
        +
        2dr^2
        \log\left(
            \frac{8Kd\sqrt{r}}{\varepsilon}
            \left(\frac{ed}{s}\right)^s
        \right)
    \right].
\end{equation}
For fixed $s$, this condition remains polynomial in $d$. Thus, extending the
active-component model to the full interaction space $\mathcal{V}_s$ preserves
the qualitative sample-complexity scaling.
\end{remark}

\subsection{Uniqueness of noiseless recovery}

We next explain how a structured RIP implies uniqueness of the noiseless
constrained recovery problem. Let
$\mathcal{X}^\star\in\M_{r,s,k}(C)$ and consider
\begin{equation}
    \label{eq:noiseless-constrained-recovery}
    \widehat{\mathcal{X}}
    \in
    \arg\min_{\mathcal{X}\in\M_{r,s,k}(C)}
    \left\|
        \mathcal{A}(\mathcal{X})-\mathbf{y}
    \right\|_2,
    \qquad
    \mathbf{y}=\mathcal{A}(\mathcal{X}^\star).
\end{equation}

\begin{corollary}[Uniqueness under structured RIP]
\label{cor:uniqueness}
Suppose that $\mathcal{A}$ satisfies the RIP on
$\M_{2r,s,2k}(2C)$ with a constant $\varepsilon<1$; that is,
\begin{equation}
    (1-\varepsilon)\|\mathcal{D}\|_F^2
    \leq
    \|\mathcal{A}(\mathcal{D})\|_2^2
    \leq
    (1+\varepsilon)\|\mathcal{D}\|_F^2
    \qquad
    \text{for every }\mathcal{D}\in\M_{2r,s,2k}(2C).
\end{equation}
Then $\mathcal{X}^\star$ is the unique minimizer of
\eqref{eq:noiseless-constrained-recovery}.
\end{corollary}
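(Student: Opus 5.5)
The plan is the standard difference argument. Since $\mathcal{X}^\star\in\M_{r,s,k}(C)$ is feasible and gives zero residual, the minimum value of \eqref{eq:noiseless-constrained-recovery} is $0$. Hence every minimizer $\widehat{\mathcal{X}}$ satisfies $\mathcal{A}(\widehat{\mathcal{X}})=\mathcal{A}(\mathcal{X}^\star)$. Set $\mathcal{D}\coloneqq\widehat{\mathcal{X}}-\mathcal{X}^\star$. Linearity gives $\mathcal{A}(\mathcal{D})=0$. If we can show $\mathcal{D}\in\M_{2r,s,2k}(2C)$, then the lower RIP bound gives
\[
(1-\varepsilon)\|\mathcal{D}\|_F^2\leq\|\mathcal{A}(\mathcal{D})\|_2^2=0 .
\]
Since $\varepsilon<1$, this forces $\mathcal{D}=0$, and therefore $\widehat{\mathcal{X}}=\mathcal{X}^\star$.

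It remains to check the three membership conditions for $\mathcal{D}$.

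\emph{Norm.} The triangle inequality gives $\|\mathcal{D}\|_F\leq\|\widehat{\mathcal{X}}\|_F+\|\mathcal{X}^\star\|_F\leq 2C$.

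\emph{Low-order structure.} Write $\widehat{\mathcal{X}}=\sum_{t=1}^k\mathcal{X}_{S_t}$ and $\mathcal{X}^\star=\sum_{t=1}^k\mathcal{X}_{S'_t}$, with each $|S_t|,|S'_t|\leq s$. Then
\[
\mathcal{D}=\sum_{t=1}^k\mathcal{X}_{S_t}+\sum_{t=1}^k\bigl(-\mathcal{X}_{S'_t}\bigr),
\]
which is a sum of $2k$ components of the required form. Each span $\operatorname{span}\{\mathcal{E}_I:I\subseteq S\}$ is a linear space, so the negated terms stay in it. Hence $\mathcal{D}\in\LC_{2k}(s)$.

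\emph{TT rank.} For each $j$, the unfolding of $\mathcal{D}$ is the difference of the two unfoldings, so $\operatorname{rank}(\mathbf{D}^{\langle j\rangle})\leq\operatorname{rank}(\widehat{\mathbf{X}}^{\langle j\rangle})+\operatorname{rank}(\mathbf{X}^{\star\langle j\rangle})\leq 2r$.

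I do not expect a real obstacle. The only point needing care is that the TT-rank subadditivity is stated at the level of the unfolding matrices, where it is just subadditivity of matrix rank. This uses the paper's definition of TT rank through unfoldings, so no core-concatenation construction is needed. The argument also shows that the RIP hypothesis is used only through its lower bound on the difference set $\M_{r,s,k}(C)-\M_{r,s,k}(C)\subseteq\M_{2r,s,2k}(2C)$. By Theorem~\ref{thm:uniform-rip} with parameters $(2r,s,2k)$, this hypothesis holds with high probability once $m$ is of the stated order.
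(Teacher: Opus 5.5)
Your proposal is correct and follows the paper's proof essentially step for step. Both arguments note that the feasible $\mathcal{X}^\star$ gives zero residual. They then place $\mathcal{D}=\widehat{\mathcal{X}}-\mathcal{X}^\star$ in $\M_{2r,s,2k}(2C)$ via subadditivity of unfolding ranks, concatenation of the two component lists with the $\mathcal{X}^\star$ terms negated, and the triangle inequality, and conclude $\mathcal{D}=0$ from the lower RIP bound with $\varepsilon<1$.
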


\begin{proof}
The ground-truth tensorization is feasible and gives zero residual, so every
minimizer $\widehat{\mathcal{X}}$ satisfies
$\mathcal{A}(\widehat{\mathcal{X}})
=\mathcal{A}(\mathcal{X}^\star)$. Set
$\mathcal{D}\coloneqq
\widehat{\mathcal{X}}-\mathcal{X}^\star$. Since both tensors belong to
$\M_{r,s,k}(C)$, subadditivity of the ranks of their TT unfoldings gives
$\operatorname{rank}_{\mathrm{TT}}(\mathcal{D})\leq2r$. Concatenating their
two cluster-component lists and negating the coefficients of
$\mathcal{X}^\star$ shows that $\mathcal{D}\in\LC_{2k}(s)$. Moreover,
$\|\mathcal{D}\|_F\leq
\|\widehat{\mathcal{X}}\|_F+\|\mathcal{X}^\star\|_F\leq2C$.
Thus, $\mathcal{D}\in\M_{2r,s,2k}(2C)$, and the lower RIP inequality
gives
\begin{equation}
    (1-\varepsilon)\|\mathcal{D}\|_F^2
    \leq
    \|\mathcal{A}(\mathcal{D})\|_2^2
    =
    0.
\end{equation}
Since $\varepsilon<1$, we obtain $\mathcal{D}=0$, and hence
$\widehat{\mathcal{X}}=\mathcal{X}^\star$.
\end{proof}

Consequently, for any $\delta\in(0,1)$, applying
Theorem~\ref{thm:uniform-rip} to $\M_{2r,s,2k}(2C)$ shows that uniqueness
holds with probability at least $1-\delta$, provided
\eqref{eq:uniform-measure-bound} holds with $r$ and $k$ replaced by $2r$
and $2k$.

\section{Analysis of the structure-aware initialization}

Algorithm~\ref{alg:rip-init} proceeds in two stages:
\begin{equation}
\begin{aligned}
    \widetilde{\mathcal{X}}_0
    &=
    \mathcal{P}_{\mathcal{V}_s}
    \mathcal{A}^*\mathcal{A}(\mathcal{X}^\star),\\
    \mathcal{X}_{\mathrm{init}}
    &=
    \operatorname{TT\text{-}SVD}_{\mathbf{r}}
    (\widetilde{\mathcal{X}}_0).
\end{aligned}
\end{equation}
Accordingly, we first bound the projected backprojection error and then
control the deterministic error introduced by TT-SVD truncation.

\subsection{Projected backprojection error}
Let $\mathcal{I}$ denote the identity operator on
$\RR^{2\times\cdots\times2}$. Since
$\M_{r,s,k}(C)\subseteq\mathcal{V}_s$, every
$\mathcal{X}\in\M_{r,s,k}(C)$ satisfies
$\mathcal{P}_{\mathcal{V}_s}(\mathcal{X})=\mathcal{X}$, and hence
\begin{equation}
    \mathcal{P}_{\mathcal{V}_s}
    \mathcal{A}^*\mathcal{A}(\mathcal{X})-\mathcal{X}
    =
    \mathcal{P}_{\mathcal{V}_s}
    (\mathcal{A}^*\mathcal{A}-\mathcal{I})
    \mathcal{P}_{\mathcal{V}_s}(\mathcal{X}).
\end{equation}
Define the compressed error operator
$
    \mathcal{B}_s
    \coloneqq
    \mathcal{P}_{\mathcal{V}_s}
    \bigl(\mathcal{A}^*\mathcal{A}-\mathcal{I}\bigr)
    \mathcal{P}_{\mathcal{V}_s}.
$
The TT-RIP argument used by Qin, Wakin, and
Zhu~\cite{qin2024guaranteed} for the unprojected spectral initializer cannot be
applied directly here. Controlling $\mathcal{B}_s$ requires operator-norm
control of
$\mathcal{P}_{\mathcal{V}_s}\mathcal{A}^*\mathcal{A}
\mathcal{P}_{\mathcal{V}_s}$ on the full space $\mathcal{V}_s$, whereas
Theorem~\ref{thm:uniform-rip} gives RIP only on
$\M_{r,s,k}(C)\subseteq\mathcal{V}_s$. Applying the same argument would
therefore require an RIP on the larger space $\mathcal{V}_s$, which is not
provided by Theorem~\ref{thm:uniform-rip}. We instead analyze
$\mathcal{B}_s$ directly using matrix concentration.
It follows that
\begin{equation}
\label{eq:model-backprojection-by-operator}
    \sup_{\substack{\mathcal{X}\in\M_{r,s,k}(C)\\
                    \mathcal{X}\neq0}}
    \frac{
        \|\mathcal{P}_{\mathcal{V}_s}
        \mathcal{A}^*\mathcal{A}(\mathcal{X})-\mathcal{X}\|_F
    }{
        \|\mathcal{X}\|_F
    }
    \leq
    \|\mathcal{B}_s\|_{F\to F},
\end{equation}
where $\|\cdot\|_{F\to F}$ is the operator norm on $\mathcal{V}_s$
induced by the Frobenius norm. We first establish a high-probability
bound for this operator norm using matrix Bernstein's inequality.

\begin{lemma}[Compressed error operator bound]
\label{lem:compressed-error-operator}
For every $\varepsilon\in(0,1)$,
\begin{equation}
\label{eq:compressed-error-operator-tail}
    \mathbb{P}\left(
        \|\mathcal{B}_s\|_{F\to F}\geq\varepsilon
    \right)
    \leq
    2D_s\exp\left(-\frac{3m\varepsilon^2}{8D_s}\right).
\end{equation}
\end{lemma}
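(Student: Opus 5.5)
We write $\mathcal{B}_s$ as a sum of independent, centered, self-adjoint random operators on the $D_s$-dimensional space $\mathcal{V}_s$ and apply the matrix Bernstein inequality. In coordinates, identify $\mathcal{V}_s$ with $V_s\subseteq\RR^N$. Since $V_s$ is spanned by standard basis vectors indexed by the sets $S$ with $|S|\le s$, we can further identify it with $\RR^{D_s}$ by restricting to those coordinates. By \eqref{eq:hadamard-row-measurement},
\[
    \mathbf{A}^*\mathbf{A}
    =
    \frac{1}{m}\sum_{\ell=1}^m \mathbf{h}_{\nu_\ell}\mathbf{h}_{\nu_\ell}^{\mathsf T}.
\]
Let $\mathbf{u}_\ell\in\{\pm1\}^{D_s}$ denote the restriction of $\mathbf{h}_{\nu_\ell}$ to the coordinates of $V_s$, and set
\[
    \mathbf{X}_\ell
    \coloneqq
    \frac{1}{m}\bigl(\mathbf{u}_\ell\mathbf{u}_\ell^{\mathsf T}-\mathbf{I}_{D_s}\bigr).
\]
Then $\mathcal{B}_s$ is represented by $\sum_{\ell}\mathbf{X}_\ell$. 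The $\mathbf{X}_\ell$ are i.i.d.\ and symmetric. They are centered because $\mathbb{E}[\mathbf{h}_\nu\mathbf{h}_\nu^{\mathsf T}]=\mathbf{I}_N$, which is the isotropy noted at the end of Section~\ref{subsec:structured-recovery-model}, and restricting to a coordinate block preserves this identity.

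Next we compute the two Bernstein parameters.

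\emph{Uniform bound.} Since $\|\mathbf{u}_\ell\|_2^2=D_s$, the matrix $\mathbf{u}_\ell\mathbf{u}_\ell^{\mathsf T}$ has eigenvalues $D_s$ and $0$. Hence $\mathbf{u}_\ell\mathbf{u}_\ell^{\mathsf T}-\mathbf{I}$ has eigenvalues $D_s-1$ and $-1$, and
\[
    \|\mathbf{X}_\ell\|_2\le \frac{D_s-1}{m}\le \frac{D_s}{m}\eqqcolon L.
\]

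\emph{Variance.} Using $(\mathbf{u}\mathbf{u}^{\mathsf T})^2=D_s\,\mathbf{u}\mathbf{u}^{\mathsf T}$, we get
\[
    \mathbb{E}\mathbf{X}_\ell^2
    =
    \frac{1}{m^2}\bigl(D_s\mathbf{I}-2\mathbf{I}+\mathbf{I}\bigr)
    =
    \frac{D_s-1}{m^2}\,\mathbf{I}.
\]
Summing over $\ell$ gives
\[
    \sigma^2
    =
    \Bigl\|\sum_\ell\mathbb{E}\mathbf{X}_\ell^2\Bigr\|_2
    \le
    \frac{D_s}{m}.
\]

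Matrix Bernstein in dimension $D_s$ then gives
\[
    \mathbb{P}\bigl(\|\mathcal{B}_s\|_{F\to F}\ge\varepsilon\bigr)
    \le
    2D_s\exp\!\left(-\frac{\varepsilon^2/2}{\sigma^2+L\varepsilon/3}\right)
    \le
    2D_s\exp\!\left(-\frac{m\varepsilon^2/2}{D_s(1+\varepsilon/3)}\right).
\]
For $\varepsilon<1$ we have $1+\varepsilon/3\le 4/3$, so the exponent is at most $-3m\varepsilon^2/(8D_s)$, which is exactly \eqref{eq:compressed-error-operator-tail}.

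The only step needing care is the identification of $\mathcal{P}_{\mathcal{V}_s}\mathcal{A}^*\mathcal{A}\mathcal{P}_{\mathcal{V}_s}$, viewed as an operator on $\mathcal{V}_s$, with the $D_s\times D_s$ principal submatrix of $\mathbf{A}^*\mathbf{A}$. This holds because $P_{V_s}$ is a coordinate projection in the $\mathcal{E}_S$ basis, and $\operatorname{vec}(\mathcal{E}_S)$ are standard basis vectors. With that in hand, the operator norm on $\mathcal{V}_s$ equals the spectral norm of $\sum_\ell\mathbf{X}_\ell$. Everything else is the direct, dimension-$D_s$ matrix Bernstein computation above; the $\pm1$ structure of the rows makes both parameters exact.
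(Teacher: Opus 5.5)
Your proof is correct and follows the paper's argument essentially verbatim. You decompose $\mathcal{B}_s$ into centered rank-one terms built from the projected Hadamard rows, obtain the same uniform bound ($D_s$) and variance ($D_s-1$), and apply matrix Bernstein in dimension $D_s$; the only differences are cosmetic: you put the factor $1/m$ inside the summands instead of using threshold $m\varepsilon$, and you identify $\mathcal{V}_s$ with $\mathbb{R}^{D_s}$ by coordinate restriction, which justifies the dimensional factor $2D_s$ slightly more cleanly than the paper's $N\times N$ formulation with $P_{V_s}$.
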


\begin{proof}
Under vectorization, $\mathcal{B}_s$ is represented on $V_s$ by the
self-adjoint matrix
\begin{equation}
    \mathbf{B}_s
    \coloneqq
    P_{V_s}
    (\mathbf{A}^*\mathbf{A}-\mathbf{I}_N)
    P_{V_s}.
\end{equation}
Since vectorization is an isometry,
$\|\mathcal{B}_s\|_{F\to F}=\|\mathbf{B}_s\|_2$.
For each $\ell$, define
\begin{equation}
    \mathbf{u}_\ell
    \coloneqq
    P_{V_s}\mathbf{h}_{\nu_\ell}
    \in V_s,
    \qquad
    \mathbf{Z}_\ell
    \coloneqq
    \mathbf{u}_\ell\mathbf{u}_\ell^*-P_{V_s}.
\end{equation}
By \eqref{eq:hadamard-row-measurement},
\begin{equation}
\label{eq:compressed-operator-matrix}
    \mathbf{B}_s
    =
    \frac{1}{m}\sum_{\ell=1}^m\mathbf{Z}_\ell.
\end{equation}
The matrices $\mathbf{Z}_1,\ldots,\mathbf{Z}_m$ are independent and
self-adjoint.

Uniform sampling and Hadamard orthogonality give
\begin{equation}
    \mathbb{E}[\mathbf{u}_\ell\mathbf{u}_\ell^*]
    =
    P_{V_s}
    \left(
        \frac{1}{N}\sum_{j=1}^N
        \mathbf{h}_j
        \mathbf{h}_j^*
    \right)
    P_{V_s}
    =P_{V_s}.
\end{equation}
Hence $\mathbb{E}\mathbf{Z}_\ell=0$. Since $P_{V_s}$ retains $D_s$
coordinates and every entry of $\mathbf{h}_{\nu_\ell}$ has magnitude one,
\begin{equation}
    \|\mathbf{u}_\ell\|_2^2=D_s.
\end{equation}
On $V_s$, the eigenvalues of $\mathbf{Z}_\ell$ are $D_s-1$ in the
direction of $\mathbf{u}_\ell$ and $-1$ on its orthogonal complement.
Thus, $\|\mathbf{Z}_\ell\|_2\leq D_s$. Furthermore, since
$P_{V_s}\mathbf{u}_\ell=\mathbf{u}_\ell$ and $P_{V_s}^2=P_{V_s}$,
\begin{equation}
\begin{aligned}
    \mathbf{Z}_\ell^2
    &=
    (\mathbf{u}_\ell\mathbf{u}_\ell^*-P_{V_s})^2\\
    &=
    (D_s-2)\mathbf{u}_\ell\mathbf{u}_\ell^*+P_{V_s},\\
    \mathbb{E}\mathbf{Z}_\ell^2
    &=
    (D_s-2)P_{V_s}+P_{V_s}\\
    &=
    (D_s-1)P_{V_s}.
\end{aligned}
\end{equation}
Thus, the variance parameter is
$\|\sum_{\ell=1}^m\mathbb{E}\mathbf{Z}_\ell^2\|_2
=m(D_s-1)$.
The self-adjoint matrix Bernstein inequality
\cite{Tropp2012UserFriendly}, with threshold $m\varepsilon$, yields
\begin{equation}
\begin{aligned}
    \mathbb{P}
    \left(
        \|\mathcal{B}_s\|_{F\to F}\geq\varepsilon
    \right)
    &\leq
    2D_s\exp\left(
        -\frac{m\varepsilon^2}
        {2(D_s-1)+(2/3)D_s\varepsilon}
    \right)\\
    &\leq
    2D_s\exp\left(
        -\frac{3m\varepsilon^2}{8D_s}
    \right),
\end{aligned}
\end{equation}
where the last inequality uses $\varepsilon<1$.
\end{proof}

Combining Lemma~\ref{lem:compressed-error-operator} with
\eqref{eq:model-backprojection-by-operator} gives the central
backprojection guarantee.

\begin{theorem}[Uniform projected backprojection bound]
\label{thm:uniform-backprojection}
Let $\varepsilon,\delta\in(0,1)$. If
\begin{equation}
\label{eq:uniform-backprojection-sample-complexity}
    m
    \geq
    \frac{8D_s}{3\varepsilon^2}
    \log\left(\frac{2D_s}{\delta}\right),
\end{equation}
then, with probability at least $1-\delta$,
\begin{equation}
    \left\|
        \mathcal{P}_{\mathcal{V}_s}
        \mathcal{A}^*\mathcal{A}(\mathcal{X})
        -\mathcal{X}
    \right\|_F
    \leq
    \varepsilon\|\mathcal{X}\|_F
    \qquad
    \text{for every }\mathcal{X}\in\M_{r,s,k}(C).
\end{equation}
\end{theorem}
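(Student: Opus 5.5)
The proof combines Lemma~\ref{lem:compressed-error-operator} with the deterministic reduction \eqref{eq:model-backprojection-by-operator}; the only work is to make the sample-size condition produce the desired failure probability. First I would confirm the reduction pointwise. Every $\mathcal{X}\in\M_{r,s,k}(C)$ lies in $\LC_k(s)\subseteq\mathcal{V}_s$, so $\mathcal{P}_{\mathcal{V}_s}\mathcal{X}=\mathcal{X}$. It follows that
\[
    \mathcal{P}_{\mathcal{V}_s}\mathcal{A}^*\mathcal{A}(\mathcal{X})-\mathcal{X}
    =\mathcal{B}_s(\mathcal{X}),
\]
and hence $\|\mathcal{P}_{\mathcal{V}_s}\mathcal{A}^*\mathcal{A}(\mathcal{X})-\mathcal{X}\|_F\leq\|\mathcal{B}_s\|_{F\to F}\|\mathcal{X}\|_F$. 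This bound is deterministic. Once the event $\{\|\mathcal{B}_s\|_{F\to F}<\varepsilon\}$ holds, the claimed inequality holds simultaneously for every $\mathcal{X}$ in the class, and even for all of $\mathcal{V}_s$. No covering or union bound over the model class is needed; this is the point of controlling the operator on the whole space $\mathcal{V}_s$.

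Second, I would invoke Lemma~\ref{lem:compressed-error-operator}. It gives $\mathbb{P}(\|\mathcal{B}_s\|_{F\to F}\geq\varepsilon)\leq 2D_s\exp(-3m\varepsilon^2/(8D_s))$. Under \eqref{eq:uniform-backprojection-sample-complexity}, we have $3m\varepsilon^2/(8D_s)\geq\log(2D_s/\delta)$, so the right-hand side is at most $2D_s\cdot\delta/(2D_s)=\delta$. Therefore, with probability at least $1-\delta$, $\|\mathcal{B}_s\|_{F\to F}<\varepsilon$, and the first step gives the conclusion for all $\mathcal{X}\in\M_{r,s,k}(C)$. The zero tensor satisfies it trivially.

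No step is a genuine obstacle, since the matrix Bernstein estimate has already been proved. The only point to check is the bookkeeping. The operator norm in \eqref{eq:model-backprojection-by-operator} is taken on $\mathcal{V}_s$, which matches the restriction of $\mathbf{B}_s$ to $V_s$ used in the lemma, and this restriction gives the dimensional factor $D_s$ rather than $N$. The lemma controls the event $\{\geq\varepsilon\}$, so on its complement the inequality holds with $\leq\varepsilon$, which is the form stated.
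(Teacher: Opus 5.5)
Your proposal is correct and follows the paper's own route. The paper likewise proves this theorem by combining the deterministic reduction \eqref{eq:model-backprojection-by-operator}, which uses $\mathcal{P}_{\mathcal{V}_s}\mathcal{X}=\mathcal{X}$ on the model class, with the matrix Bernstein tail of Lemma~\ref{lem:compressed-error-operator}; your verification that the sample-size condition forces $2D_s\exp(-3m\varepsilon^2/(8D_s))\leq\delta$ is exactly the bookkeeping the paper leaves implicit.
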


\subsection{TT-SVD truncation and initialization guarantee}
The second initialization stage imposes the target TT-rank by truncated
TT-SVD. The following deterministic lemma transfers any
pre-truncation error bound to the truncated tensor.

\begin{lemma}[TT-SVD postprocessing error]
\label{lem:tt-svd-postprocessing}
Let $\mathcal{X}$ be a $d$-way tensor satisfying
$\operatorname{rank}_{\mathrm{TT}}(\mathcal{X})\leq\mathbf{r}$, and let
$\widetilde{\mathcal{X}}$ be any perturbation of $\mathcal{X}$. Define
\begin{equation}
    \widehat{\mathcal{X}}
    \coloneqq
    \operatorname{TT\text{-}SVD}_{\mathbf{r}}
    (\widetilde{\mathcal{X}}).
\end{equation}
Then
\begin{equation}
    \|\widehat{\mathcal{X}}-\mathcal{X}\|_F
    \leq
    \left(1+\sqrt{d-1}\right)
    \|\widetilde{\mathcal{X}}-\mathcal{X}\|_F.
\end{equation}
\end{lemma}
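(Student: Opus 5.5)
The proof is a triangle inequality combined with the quasi-optimality of truncated TT-SVD recalled in Section~\ref{subsec:tt-decomposition}. I would insert the perturbation $\widetilde{\mathcal{X}}$ and write
\begin{equation*}
    \|\widehat{\mathcal{X}}-\mathcal{X}\|_F
    \leq
    \|\widehat{\mathcal{X}}-\widetilde{\mathcal{X}}\|_F
    +
    \|\widetilde{\mathcal{X}}-\mathcal{X}\|_F .
\end{equation*}
The second term is already of the required form. It therefore suffices to show that the first term is at most $\sqrt{d-1}\,\|\widetilde{\mathcal{X}}-\mathcal{X}\|_F$.

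For the first term, I would apply the estimate of \cite[Corollary~2.4]{Oseledets2011} to the tensor $\widetilde{\mathcal{X}}$:
\begin{equation*}
    \|\widetilde{\mathcal{X}}-\operatorname{TT\text{-}SVD}_{\mathbf{r}}(\widetilde{\mathcal{X}})\|_F
    \leq
    \sqrt{d-1}\inf_{\operatorname{rank}_{\mathrm{TT}}(\mathcal{Y})\leq\mathbf{r}}\|\widetilde{\mathcal{X}}-\mathcal{Y}\|_F .
\end{equation*}
By hypothesis, $\operatorname{rank}_{\mathrm{TT}}(\mathcal{X})\leq\mathbf{r}$. Hence $\mathcal{X}$ is admissible in the infimum, and choosing $\mathcal{Y}=\mathcal{X}$ bounds the infimum by $\|\widetilde{\mathcal{X}}-\mathcal{X}\|_F$. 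Adding the two contributions gives the factor $1+\sqrt{d-1}$.

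I do not expect a real obstacle here. The only point to check is that the quasi-optimality bound holds for an arbitrary input tensor $\widetilde{\mathcal{X}}$ with the same prescribed rank vector $\mathbf{r}$ that bounds the rank of $\mathcal{X}$. This matches exactly how the estimate was recalled, with the infimum taken over the componentwise rank constraint $\operatorname{rank}_{\mathrm{TT}}(\mathcal{Y})\leq\mathbf{r}$. No attainment of the infimum is needed, since we only use the upper bound obtained from the feasible choice $\mathcal{Y}=\mathcal{X}$.
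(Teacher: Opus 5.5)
Your proposal is correct and matches the paper's proof step for step. Both insert $\widetilde{\mathcal{X}}$ via the triangle inequality, apply the quasi-optimality of truncated TT-SVD to $\widetilde{\mathcal{X}}$, and bound the infimum using the admissible choice $\mathcal{X}$, which has TT rank at most $\mathbf{r}$.
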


\begin{proof}
The triangle inequality and TT-SVD quasi-optimality
\cite[Corollary~2.4]{Oseledets2011} give
\begin{equation}
\begin{aligned}
    \|\widehat{\mathcal{X}}-\mathcal{X}\|_F
    &\leq
    \|\widehat{\mathcal{X}}-\widetilde{\mathcal{X}}\|_F
    +\|\widetilde{\mathcal{X}}-\mathcal{X}\|_F\\
    &\leq
    \sqrt{d-1}
    \inf_{\operatorname{rank}_{\mathrm{TT}}(\mathcal{Z})\leq\mathbf{r}}
    \|\widetilde{\mathcal{X}}-\mathcal{Z}\|_F
    +\|\widetilde{\mathcal{X}}-\mathcal{X}\|_F\\
    &\leq
    \left(1+\sqrt{d-1}\right)
    \|\widetilde{\mathcal{X}}-\mathcal{X}\|_F.
\end{aligned}
\end{equation}
The last inequality uses the admissible choice $\mathcal{Z}=\mathcal{X}$.
\end{proof}

Combining Theorem~\ref{thm:uniform-backprojection} with
Lemma~\ref{lem:tt-svd-postprocessing} gives the final initialization
guarantee.

\begin{theorem}[Uniform initialization error bound]
\label{thm:initialization-error}
Let $\varepsilon,\delta\in(0,1)$ and set
$\alpha_d\coloneqq 1+\sqrt{d-1}$. If
\begin{equation}
    \label{eq:initialization-sample-complexity}
    m
    \geq
    \frac{8D_s\alpha_d^2}{3\varepsilon^2}
    \log\left(\frac{2D_s}{\delta}\right),
\end{equation}
then, with probability at least $1-\delta$, simultaneously for every
$\mathcal{X}\in\M_{r,s,k}(C)$ and every target rank vector $\mathbf{r}$
satisfying $\operatorname{rank}_{\mathrm{TT}}(\mathcal{X})\leq\mathbf{r}$,
the initializer
\begin{equation}
    \mathcal{X}_{\mathrm{init}}
    =
    \operatorname{TT\text{-}SVD}_{\mathbf{r}}
    \left(
        \mathcal{P}_{\mathcal{V}_s}
        \mathcal{A}^*\mathcal{A}(\mathcal{X})
    \right)
\end{equation}
satisfies
\begin{equation}
    \|\mathcal{X}_{\mathrm{init}}-\mathcal{X}\|_F
    \leq
    \varepsilon\|\mathcal{X}\|_F.
\end{equation}
\end{theorem}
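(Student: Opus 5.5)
The plan is to combine Theorem~\ref{thm:uniform-backprojection}, applied with a rescaled accuracy, and the deterministic Lemma~\ref{lem:tt-svd-postprocessing}. Set $\varepsilon'\coloneqq\varepsilon/\alpha_d$. Since $\alpha_d\geq1$, we have $\varepsilon'\in(0,1)$. Condition \eqref{eq:initialization-sample-complexity} is then exactly \eqref{eq:uniform-backprojection-sample-complexity} with $\varepsilon$ replaced by $\varepsilon'$, because $8D_s/(3\varepsilon'^2)=8D_s\alpha_d^2/(3\varepsilon^2)$. Hence Theorem~\ref{thm:uniform-backprojection}, or more directly Lemma~\ref{lem:compressed-error-operator} together with \eqref{eq:model-backprojection-by-operator}, produces a single event $E$ of probability at least $1-\delta$. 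On $E$,
\[
    \bigl\|\mathcal{P}_{\mathcal{V}_s}\mathcal{A}^*\mathcal{A}(\mathcal{X})-\mathcal{X}\bigr\|_F\leq\frac{\varepsilon}{\alpha_d}\|\mathcal{X}\|_F
    \qquad\text{for every }\mathcal{X}\in\M_{r,s,k}(C).
\]

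Next, I would argue deterministically on $E$. Fix any $\mathcal{X}\in\M_{r,s,k}(C)$ and any rank vector $\mathbf{r}$ with $\operatorname{rank}_{\mathrm{TT}}(\mathcal{X})\leq\mathbf{r}$. Apply Lemma~\ref{lem:tt-svd-postprocessing} with the perturbation $\widetilde{\mathcal{X}}=\mathcal{P}_{\mathcal{V}_s}\mathcal{A}^*\mathcal{A}(\mathcal{X})$. This gives
\[
    \|\mathcal{X}_{\mathrm{init}}-\mathcal{X}\|_F\leq\alpha_d\|\widetilde{\mathcal{X}}-\mathcal{X}\|_F\leq\alpha_d\cdot\frac{\varepsilon}{\alpha_d}\|\mathcal{X}\|_F=\varepsilon\|\mathcal{X}\|_F.
\]

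The only point needing care is the simultaneity claimed in the statement: the bound must hold for all $\mathcal{X}$ and all admissible $\mathbf{r}$ at once, not merely for each fixed pair with high probability. This is why I would define the event $E$ through the operator-norm bound $\|\mathcal{B}_s\|_{F\to F}\leq\varepsilon/\alpha_d$. That bound does not depend on $\mathcal{X}$ or $\mathbf{r}$. The rank vector enters only through the deterministic TT-SVD lemma, whose hypothesis $\operatorname{rank}_{\mathrm{TT}}(\mathcal{X})\leq\mathbf{r}$ is exactly what the statement assumes. So no union bound over $\mathbf{r}$ or over $\mathcal{X}$ is needed, and I do not expect a genuine obstacle beyond this bookkeeping.
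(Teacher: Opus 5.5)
Your proposal is correct and matches the paper's proof: you apply Theorem~\ref{thm:uniform-backprojection} at accuracy $\varepsilon/\alpha_d$, whose sample condition is exactly \eqref{eq:initialization-sample-complexity}, and then apply Lemma~\ref{lem:tt-svd-postprocessing} deterministically on that single event. Your observation that the event is the bound $\|\mathcal{B}_s\|_{F\to F}<\varepsilon/\alpha_d$, which does not depend on $\mathcal{X}$ or $\mathbf{r}$, is what justifies the paper's closing claim of simultaneity.
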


\begin{proof}
Apply Theorem~\ref{thm:uniform-backprojection} with accuracy
$\varepsilon/\alpha_d$ and then Lemma~\ref{lem:tt-svd-postprocessing}.
Under \eqref{eq:initialization-sample-complexity}, with probability at
least $1-\delta$,
\begin{equation}
    \|\mathcal{X}_{\mathrm{init}}-\mathcal{X}\|_F
    \leq
    \alpha_d
    \left\|
        \mathcal{P}_{\mathcal{V}_s}
        \mathcal{A}^*\mathcal{A}(\mathcal{X})
        -\mathcal{X}
    \right\|_F
    \leq
    \varepsilon\|\mathcal{X}\|_F.
\end{equation}
The estimate holds simultaneously for every admissible $\mathcal{X}$ and
$\mathbf{r}$.
\end{proof}

\begin{remark}[Scaling of the initialization guarantee]
For $1\leq s<d$, the bound $D_s\leq(ed/s)^s$ shows that it is sufficient
to take
\begin{equation}
    m
    \geq
    \frac{8\alpha_d^2}{3\varepsilon^2}
    \left(\frac{ed}{s}\right)^s
    \log\left[
        \frac{2}{\delta}
        \left(\frac{ed}{s}\right)^s
    \right].
\end{equation}
For fixed $s$, $\varepsilon$, and $\delta$, we have
$\alpha_d^2=\mathcal{O}(d)$; hence, the sufficient sample size satisfies
\begin{equation}
    m=\mathcal{O}\left(d^{s+1}\log d\right).
\end{equation}
Thus, for fixed interaction order $s$, the required number of
measurements grows polynomially with the tensor order $d$, rather than
exponentially through the ambient dimension $N=2^d$.
\end{remark}

\section{Experiments}
We test three aspects of the proposed framework: restricted isometry on the
structured model class, the accuracy of the structure-aware initialization, and
its effect on the full ALS recovery workflow.

\paragraph{Common experimental setup.}
All experiments use synthetic data from the same sliding-window interaction
model so that the sparse low-order interaction structure and TT ranks are
controlled.

Fix an interaction order \(s\), a number of active components \(k\), and a
step size \(p\). For \(t=1,\ldots,k\), define the sliding-window mode sets
\begin{equation}
    a_t\coloneqq 1+(t-1)p,
    \qquad
    I_t\coloneqq\{a_t,\ldots,a_t+s-1\},
\end{equation}
where \(a_t+s-1\leq d\). We then construct the target tensor as a sum of
rank-one cluster components associated with these windows:
\begin{equation}
\label{eq:sliding-window-model}
\begin{aligned}
    \mathcal{X}^\star&\coloneqq\sum_{t=1}^k \mathcal{X}_{I_t},
    &\mathcal{X}_{I_t}&\coloneqq
    \mathbf{u}_1^{(t)}\circ\cdots\circ\mathbf{u}_d^{(t)},\\
    \mathbf{u}_j^{(t)}&\coloneqq
    \begin{cases}
        \mathbf{v}_j^{(t)}, & j\in I_t,\\
        (1,0)^{\mathsf T}, & j\notin I_t.
    \end{cases}
\end{aligned}
\end{equation}
Each active vector is sampled independently as
\(\mathbf{v}_j^{(t)}=\mathbf{z}_j^{(t)}/\|\mathbf{z}_j^{(t)}\|_2\), where
\(\mathbf{z}_j^{(t)}\sim\mathcal N(0,\mathbf{I}_2)\).
By the definition of $\LC_k(s)$, $\mathcal{X}_{I_t}$ is a cluster component
involving only the modes in $I_t$.
Thus, \(\mathcal{X}^\star\in\LC_k(s)\subseteq\mathcal{V}_s\) and
\(\operatorname{rank}_{\mathrm{TT}}(\mathcal{X}^\star)\leq k\). In practice, the
TT ranks can be smaller because only active windows crossing a given
tensor-train cut contribute to the corresponding unfolding rank. For each tensor
\(\mathcal{X}\), let
\(r_{\max}(\mathcal{X})\coloneqq\max_j r_j(\mathcal{X})\).
For each fixed sliding-window pattern, \(r_{\max}(\mathcal{X})\) appeared
empirically constant across the generated tensors: its sample mean and sample
maximum coincided within each test set.

\subsection{Tests for the RIP}
\label{subsec:rip-experiments}
Theorem~\ref{thm:uniform-rip} takes a supremum over the full model class and
gives a probabilistic guarantee for the random operator. We investigate it
empirically using a finite-test-set approximation to the supremum and repeated
independent draws of the operator.

We draw \(100\) independent operators whose rows are sampled uniformly with
replacement. For each operator, we generate an independent test set
\(\mathcal T\) of \(200\) tensors and estimate its largest relative distortion
by
\begin{equation}
    \widehat{\delta}_{\mathcal T}(\mathcal A)
    \coloneqq
    \max_{\mathcal X\in\mathcal T}\left|
        \frac{\|\mathcal A(\mathcal X)\|_2^2}{\|\mathcal X\|_F^2}-1
    \right|.
\end{equation}
We report the mean of \(\widehat{\delta}_{\mathcal T}(\mathcal A)\) over the
\(100\) operators. All operator applications use the fast Walsh--Hadamard
transform defined in Section~\ref{subsec:notation}.

\begin{figure}[!htbp]
    \centering
    \begin{subfigure}[t]{0.47\textwidth}
        \centering
        \includegraphics[width=\linewidth]{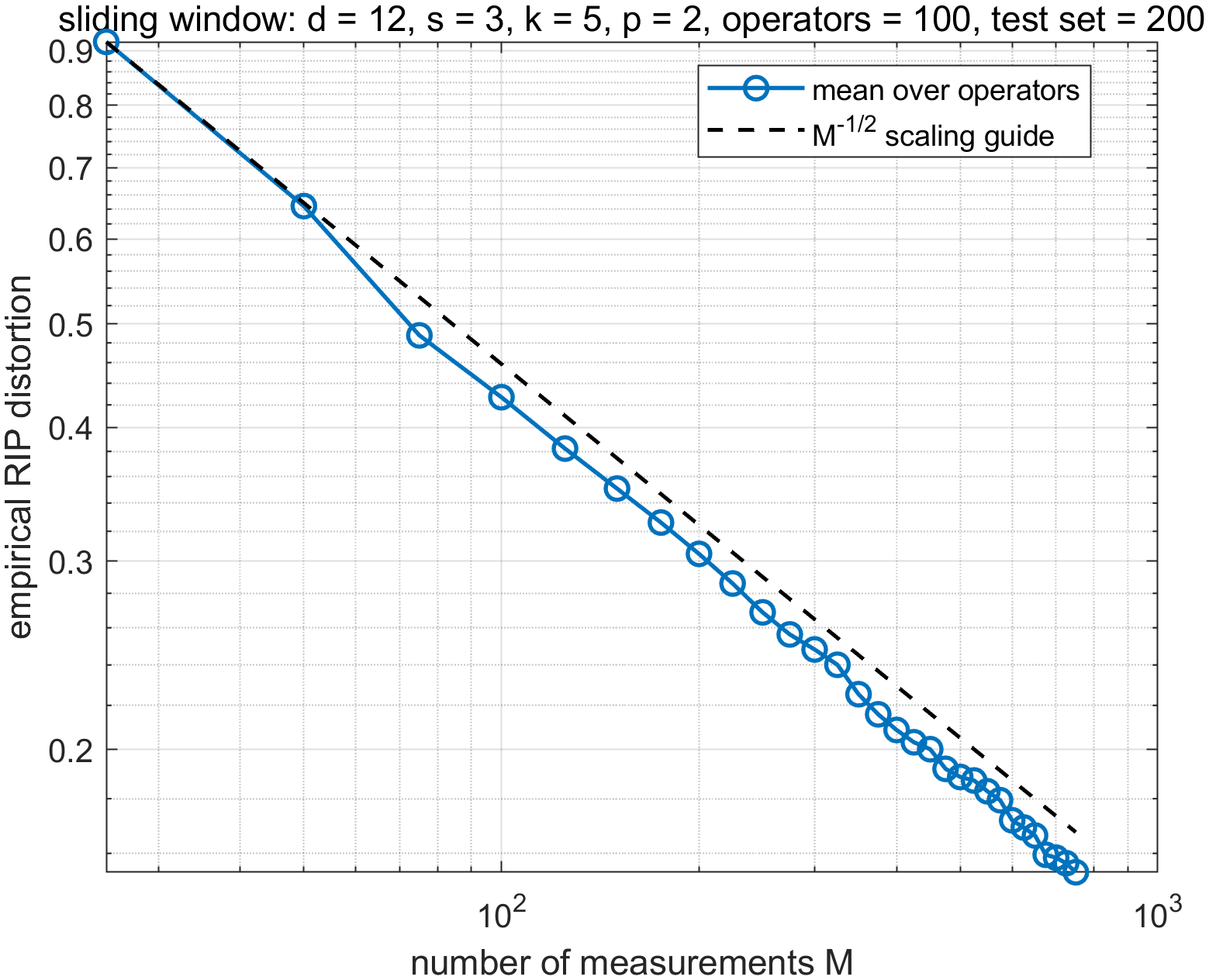}
        \caption{Measurements \(m\).}
        \label{fig:rip-measurement-scaling}
    \end{subfigure}
    \hfill
    \begin{subfigure}[t]{0.47\textwidth}
        \centering
        \includegraphics[width=\linewidth]{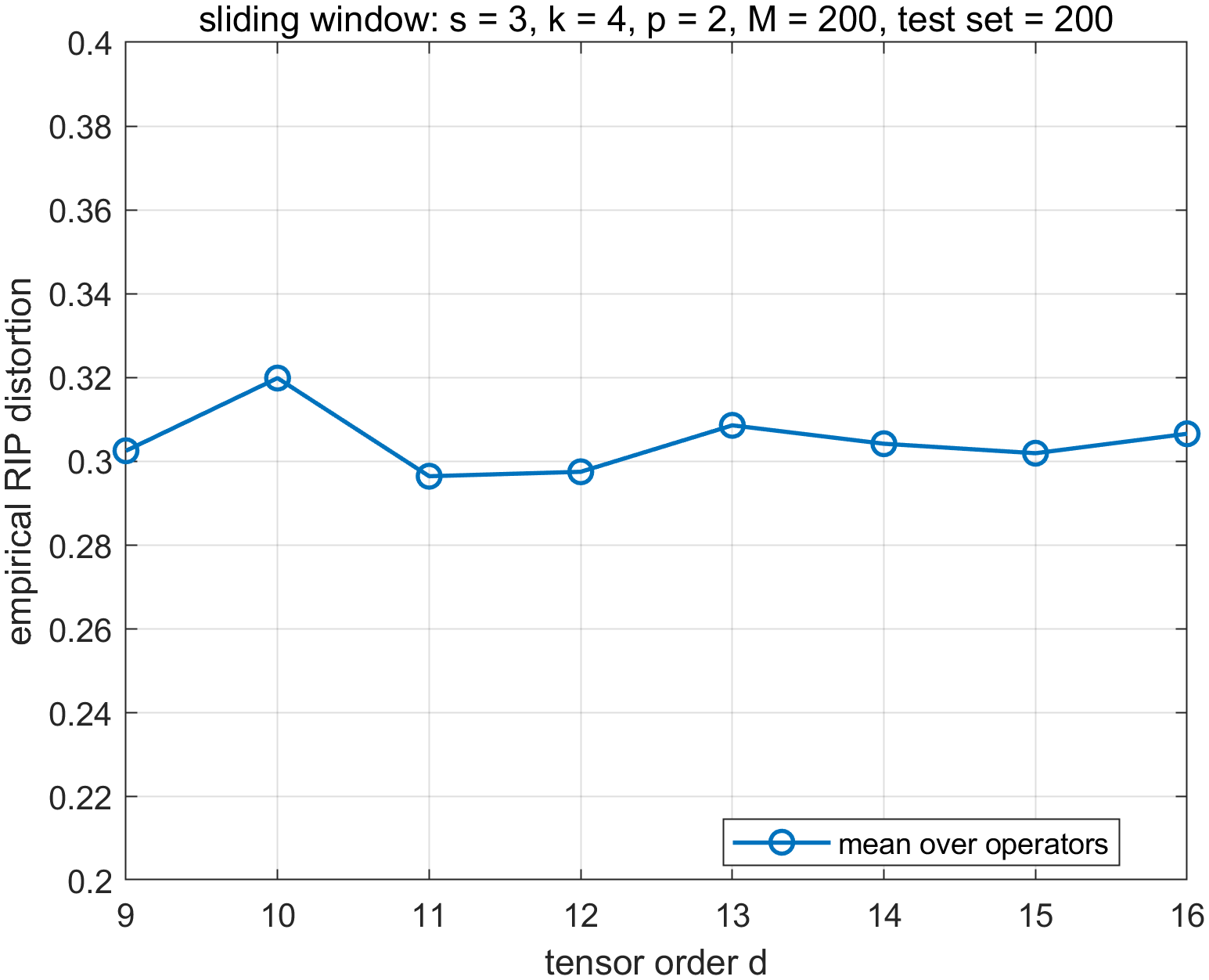}
        \caption{Tensor order \(d\).}
        \label{fig:rip-dimension-scaling}
    \end{subfigure}
    \par\vspace{0.15em}
    \begin{subfigure}[t]{0.47\textwidth}
        \centering
        \includegraphics[width=\linewidth]{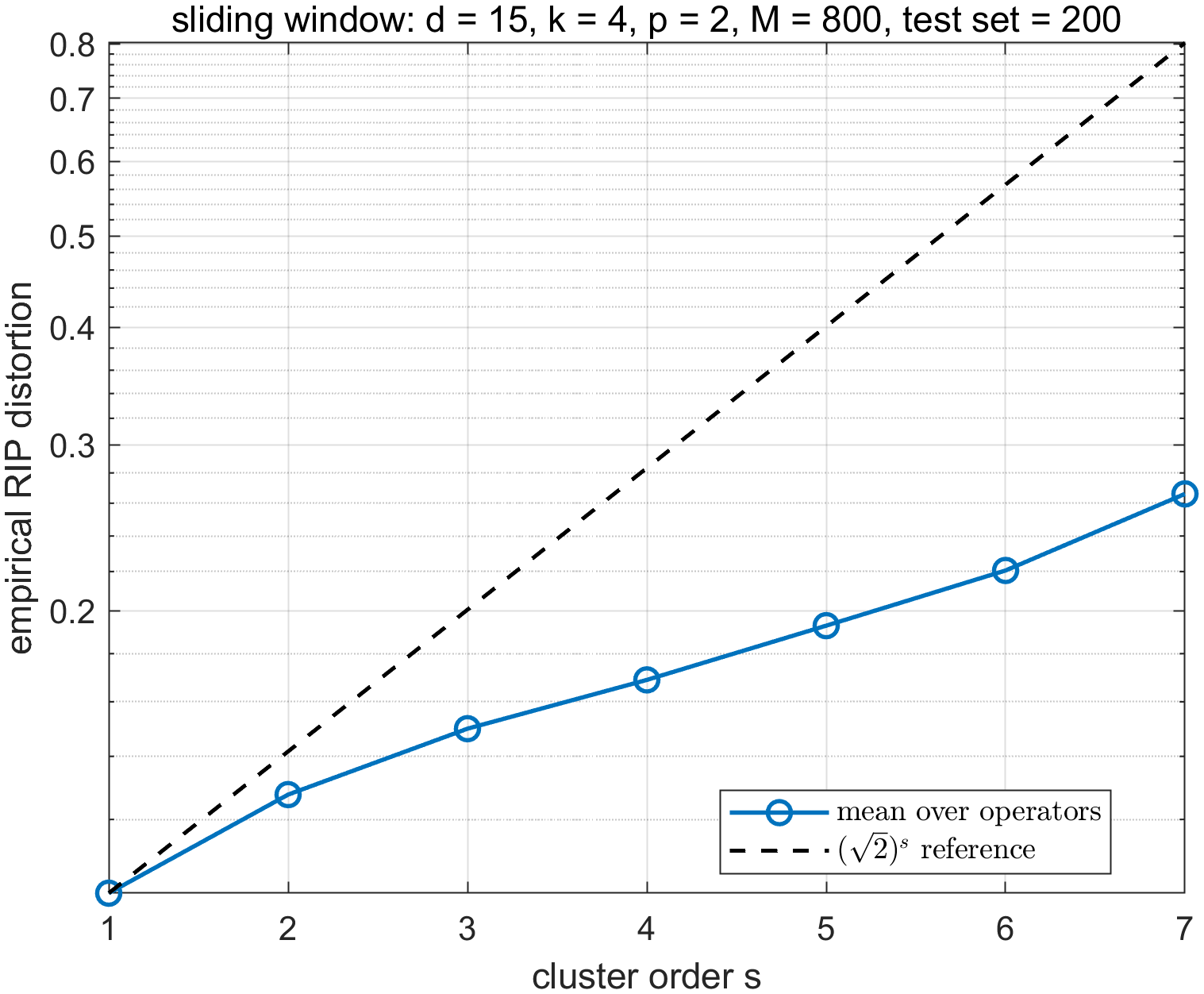}
        \caption{Interaction order \(s\).}
        \label{fig:rip-interaction-order-scaling}
    \end{subfigure}
    \hfill
    \begin{subfigure}[t]{0.47\textwidth}
        \centering
        \includegraphics[width=\linewidth]{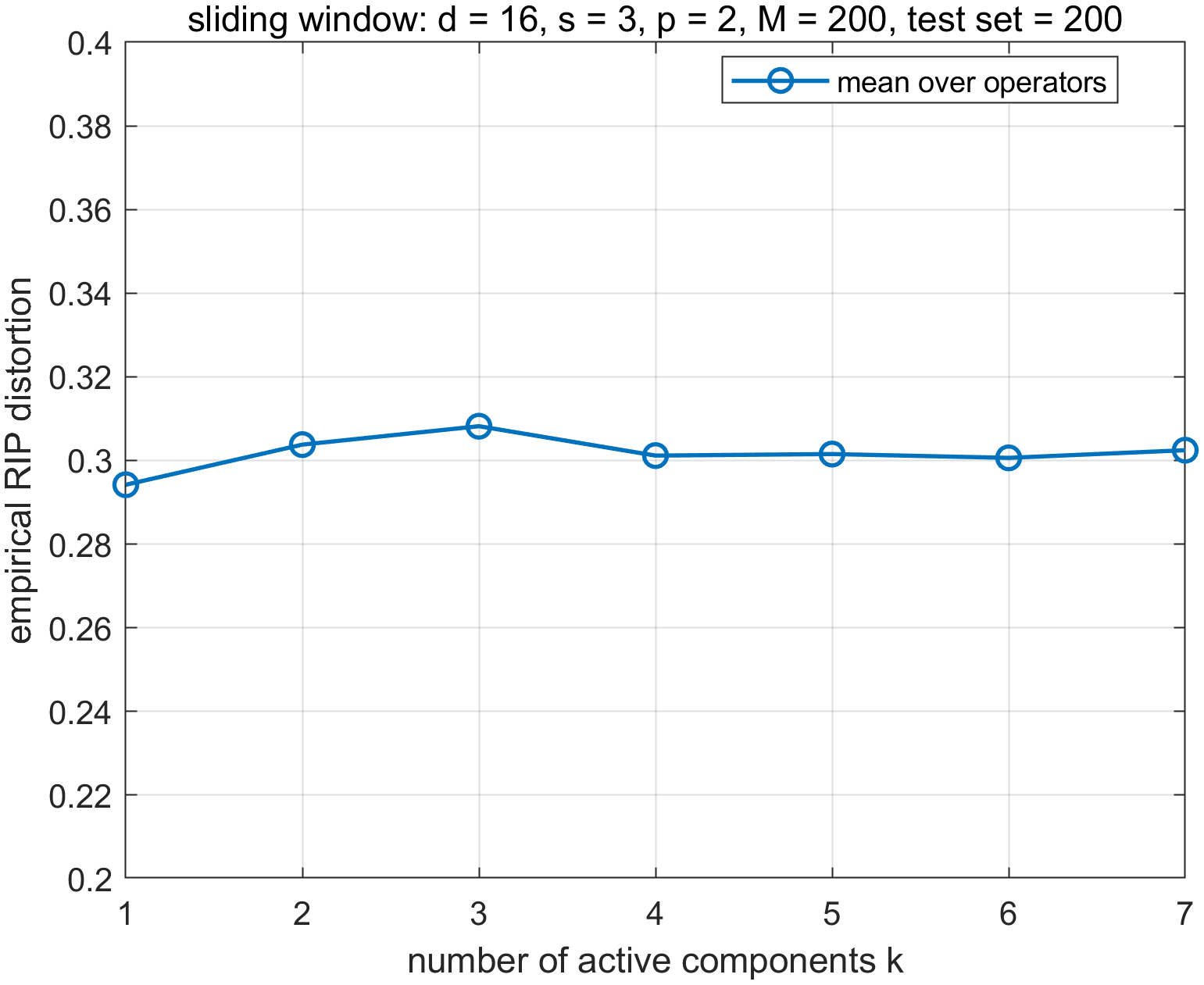}
        \caption{Active components \(k\).}
        \label{fig:rip-active-components-scaling}
    \end{subfigure}

    \caption{Empirical RIP estimates under sweeps of
    (a) \(m=25,50,\ldots,750\), (b) \(d=9,\ldots,16\),
    (c) \(s=1,\ldots,7\), and (d) \(k=1,\ldots,7\).}
    \label{fig:rip-sweeps}
\end{figure}
\FloatBarrier

To extract qualitative predictions from Theorem~\ref{thm:uniform-rip}, we vary
one parameter at a time. Fixing the failure probability and ignoring logarithmic factors and the dependence on the TT-rank bound \(r\), the sample-complexity bound in \eqref{eq:uniform-measure-bound} suggests the following heuristic scalings:
\begin{equation}
    \varepsilon_{\mathrm{RIP}}
    \propto
    \begin{cases}
        m^{-1/2}, & \text{as the measurement budget }m\text{ varies},\\
        d^{1/2}, & \text{as the tensor order }d\text{ varies},\\
        (\sqrt{2})^s, & \text{as the interaction order }s\text{ varies},\\
        k^{1/2}, & \text{as the number of active components }k\text{ varies}.
    \end{cases}
\end{equation}
The reference curves in panels~(a) and~(c) are scaling guides, not
numerical upper bounds: each is normalized to the first empirical point of the
corresponding sweep and indicates only the predicted dependence on the varied
parameter.

Figure~\ref{fig:rip-sweeps}(a) varies the measurement budget
\(m=25,50,\ldots,750\) and fixes \((d,s,k,p)=(12,3,5,2)\). The empirical
distortion closely follows the Monte Carlo rate \(m^{-1/2}\).

Panel~(b) varies the tensor order \(d=9,\ldots,16\) and fixes
\((s,k,p,m)=(3,4,2,200)\). The distortion remains between approximately
\(0.30\) and \(0.32\), with no visible \(d^{1/2}\) growth; the observed
dependence on \(d\) is therefore milder than the leading worst-case
prediction, suggesting that the current theory may be conservative.

Panel~(c) varies the interaction order \(s=1,\ldots,7\) and fixes
\((d,k,p,m)=(15,4,2,800)\). The distortion increases with \(s\), as predicted,
but much more slowly than the \((\sqrt{2})^s\) guide. The theorem uses the
generic sparsity estimate
\(|\operatorname{supp}(\operatorname{vec}(\mathcal X))|\leq
k2^s\), obtained by adding the \(2^s\) possible cluster-basis coordinates of
each component. Neighboring windows share some of these coordinates, so the
support of their sum can be substantially smaller than \(k2^s\). This smaller
effective sparsity may partly explain the milder observed growth.

Panel~(d) varies the number of active components \(k=1,\ldots,7\) and fixes
\((d,s,p,m)=(16,3,2,200)\). The distortion remains close to \(0.30\), with no
visible \(k^{1/2}\) growth; the observed dependence on \(k\) is again more
favorable than the leading worst-case prediction.

Together, the four sweeps support Theorem~\ref{thm:uniform-rip} empirically,
while indicating that its dependence on \(d\), \(s\), and \(k\) is
conservative for the sampled sliding-window family. The experiment replaces
the supremum over the full model class by a maximum over a finite
sliding-window test set, so these results support the predicted scaling but do
not verify the uniform guarantee.

\subsection{Initialization error and scaling}

To isolate the effect of each step in Algorithm~\ref{alg:rip-init}, we compare
three estimates against the ground-truth tensor $\mathcal{X}^\star$: the
adjoint backprojection $\mathcal{A}^*\mathbf{y}$, its projection onto
$\mathcal{V}_s$, and the TT-SVD truncation of that projection to rank
$\mathbf{r}$. We also include the standard spectral initializer, obtained by
applying the same TT-SVD truncation directly to
$\mathcal{A}^*\mathbf{y}$, as a benchmark that omits the model projection. For
every estimate $\mathcal{Z}$,
we report the relative error
$e(\mathcal{Z})=\|\mathcal{Z}-\mathcal{X}^\star\|_F/
\|\mathcal{X}^\star\|_F$. Because $\mathcal{X}^\star$ is a sum of $k$
rank-one cluster components, its TT ranks are at most $k$. We therefore use
the uniform TT-SVD rank cap $r_j=k$, $j=1,\ldots,d-1$. Here, $k$ denotes the
number of rank-one cluster components in \eqref{eq:sliding-window-model}. As in
the RIP experiments, we pair each of $100$ independent operators with $200$
independently generated tensors and report the mean, over operators, of the
maximum relative error on each test set. The measurement operator and its
adjoint are implemented via the fast Walsh--Hadamard transform, and
$\mathcal{P}_{\mathcal{V}_s}$ is evaluated as a coordinate mask.

In each experiment, we vary one parameter while holding the others fixed: the
measurement budget $m\in\{150,175,\ldots,800\}$ with
$(d,s,k,p)=(10,3,3,2)$; the tensor order $d\in\{9,\ldots,14\}$ with
$(s,k,p,m)=(3,3,2,500)$; and the interaction order
$s\in\{1,\ldots,7\}$ with $(d,k,p,m)=(12,4,1,800)$. Here, $p$ denotes the
step size between consecutive sliding windows.

To compare the observed trends with
Theorem~\ref{thm:uniform-backprojection} and
Lemma~\ref{lem:tt-svd-postprocessing}, we use the scaling guides
\begin{equation}
\begin{aligned}
g_{\mathrm{bp}}(d,s,m)
&\coloneqq
\left[\frac{D_s}{m}\log\!\left(\frac{2D_s}{\delta}\right)\right]^{1/2},\\
g_{\mathrm{init}}(d,s,m)
&\coloneqq
(1+\sqrt{d-1})g_{\mathrm{bp}}(d,s,m),
\end{aligned}
\end{equation}
with \(\delta=0.05\), for the projected backprojection and post-TT-SVD
errors, respectively.
When only \(m\) varies, both guides reduce to the
\(m^{-1/2}\) rate. Each guide is normalized independently to the first point
of its empirical curve, so the comparison concerns scaling rather than the
absolute constants in the bounds.

\begin{figure}[!htbp]
    \centering
    \begin{subfigure}[t]{0.48\textwidth}
        \centering
        \includegraphics[width=\linewidth]{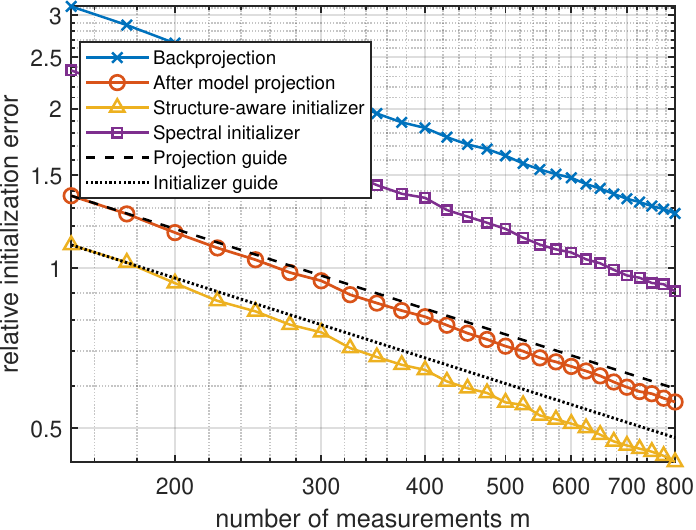}
        \caption{Measurements \(m\).}
        \label{fig:init-measurement-scaling}
    \end{subfigure}
    \hfill
    \begin{subfigure}[t]{0.48\textwidth}
        \centering
        \includegraphics[width=\linewidth]{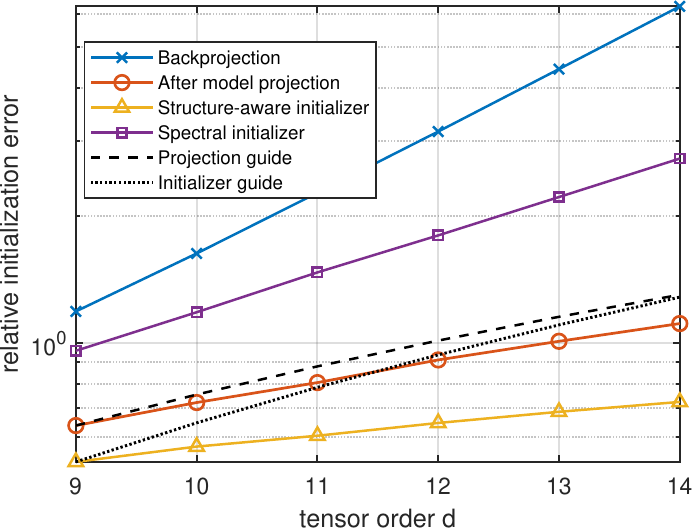}
        \caption{Tensor order \(d\).}
        \label{fig:init-dimension-scaling}
    \end{subfigure}
    \par\vspace{0.2em}
    \begin{subfigure}[t]{0.48\textwidth}
        \centering
        \includegraphics[width=\linewidth]{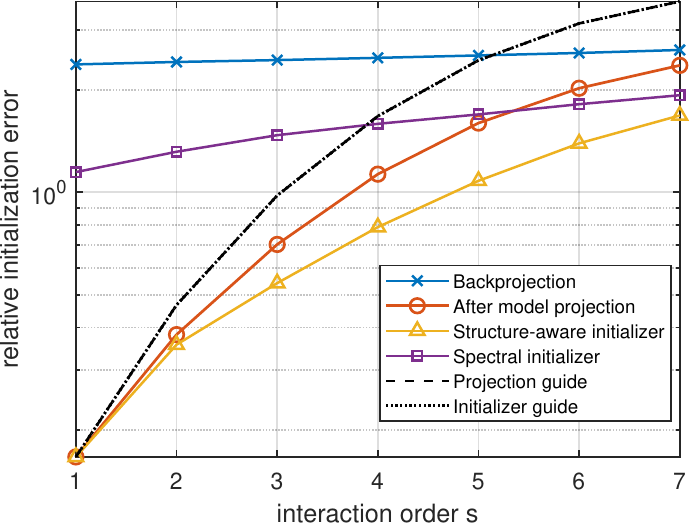}
        \caption{Interaction order \(s\).}
        \label{fig:init-interaction-order-scaling}
    \end{subfigure}

    \caption{Empirical initialization errors at the three successive stages
    and for the spectral benchmark.}
    \label{fig:init-scaling}
\end{figure}
\FloatBarrier

Figure~\ref{fig:init-scaling} supports three conclusions. First, the projected
backprojection and the post-TT-SVD initializer are consistent with the scaling
predicted by Theorems~\ref{thm:uniform-backprojection}
and~\ref{thm:initialization-error}, respectively. Second, the
consistent ordering
\[
e(\mathcal A^*\mathbf y)
>
e(\mathcal P_{\mathcal V_s}\mathcal A^*\mathbf y)
>
e\!\left(\operatorname{TT\text{-}SVD}_{\mathbf r}
(\mathcal P_{\mathcal V_s}\mathcal A^*\mathbf y)\right)
\]
shows that both the interaction-space projection and the TT-SVD truncation
reduce the error, supporting the design of the proposed initializer.
For the TT-SVD step, however, Lemma~\ref{lem:tt-svd-postprocessing} guarantees
only that the relative post-TT-SVD error is bounded by
\((1+\sqrt{d-1})e(\mathcal P_{\mathcal V_s}\mathcal A^*\mathbf y)\).
In these experiments, TT-SVD therefore acts as a denoising step. Analyzing the
spectra of the TT unfoldings of
\(\mathcal P_{\mathcal V_s}\mathcal A^*\mathbf y\) may lead to a sharper
post-TT-SVD error bound. Third, the structure-aware initializer
has lower error than
\(\operatorname{TT\text{-}SVD}_{\mathbf r}(\mathcal A^*\mathbf y)\)
throughout, showing that the additional model projection improves on the
standard spectral initializer in this setting.

\subsection{Initialization in the full ALS recovery scheme}

The preceding experiments assess the initializer before optimization; we now
test it within the full fixed-rank ALS scheme of Section~\ref{sec:iter-opt}.
Within each paired trial, the methods differ only in initialization: they use
the same ground-truth tensor, Walsh--Hadamard measurements, oracle TT ranks,
and fixed-rank ALS recovery algorithm. The proposed structure-aware
initializer is
$
\operatorname{TT\text{-}SVD}_{\mathbf r}\!\left(
\mathcal{P}_{\mathcal{V}_s}\mathcal{A}^*\mathbf{y}
\right).
$
We compare it with the standard unprojected spectral initializer
$
\operatorname{TT\text{-}SVD}_{\mathbf r}
\!\left(\mathcal{A}^*\mathbf{y}\right),
$
which isolates the effect of the low-order projection.

The two uninformative baselines are a random TT tensor generated by
\texttt{tt\_random} with the oracle ranks and a near-zero initializer. Because
an exact zero tensor has no nondegenerate prescribed-rank TT-SVD factors, the
latter is formed by scaling a Gaussian vector to
\(10^{-6}\|\mathcal{X}^\star\|_F\) and truncating it to the oracle ranks. For each
parameter value, we run \(100\) independent tensor--operator trials, each for
at most \(100\) ALS sweeps. Success means a final relative error below
\(10^{-5}\). We report the success rate, mean final relative error, and mean
first-passage sweep count, assigning a value of \(101\) to trials that do not
reach the tolerance within \(100\) sweeps.

\paragraph{Varying the tensor order.}
We first vary the tensor order from \(d=9\) to \(16\), while fixing the
interaction order at \(s=3\), the number of active components at \(k=3\), the
window step size at \(p=2\), and the measurement budget at \(m=400\). Thus, the ambient
dimension grows from \(2^9\) to \(2^{16}\) while the local interaction
structure and measurement budget remain fixed.

\begin{figure}[!htbp]
    \centering

    \begin{subfigure}[t]{0.32\textwidth}
        \centering
        \includegraphics[width=\linewidth]
        {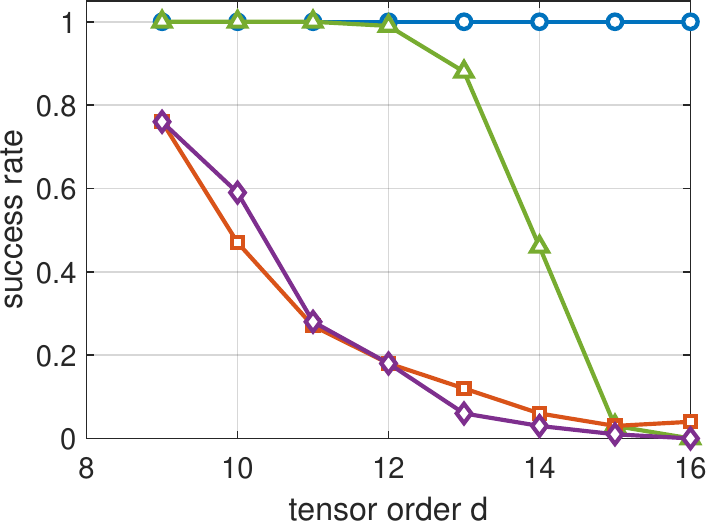}
        \caption{Recovery success rate.}
        \label{fig:recovery-dimension-success}
    \end{subfigure}
    \hfill
    \begin{subfigure}[t]{0.32\textwidth}
        \centering
        \includegraphics[width=\linewidth]
        {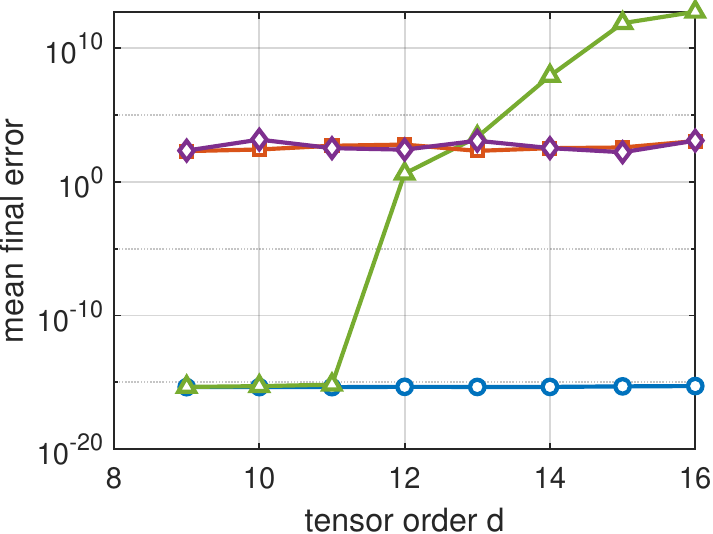}
        \caption{Mean final relative error.}
        \label{fig:recovery-dimension-error}
    \end{subfigure}
    \hfill
    \begin{subfigure}[t]{0.32\textwidth}
        \centering
        \includegraphics[width=\linewidth]
        {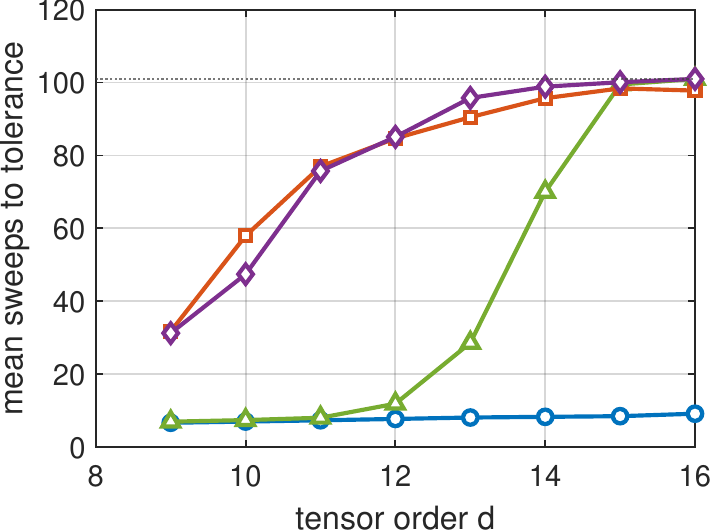}
        \caption{Mean sweeps to tolerance.}
        \label{fig:recovery-dimension-iterations}
    \end{subfigure}

    \caption{Full ALS recovery versus tensor order for \(m=400\), based on
    \(100\) trials. Initializers: structure-aware (blue circles), spectral
    (green triangles), random TT (orange squares), and near-zero (purple
    diamonds).}
    \label{fig:recovery-dimension}
\end{figure}

At fixed \(m=400\), the structure-aware initializer recovers every trial at
every tested tensor order, with near-machine-precision final error and only a
modest increase in sweep count. In contrast, the spectral initializer becomes
unreliable as \(d\) grows and fails all trials at \(d=16\); the uninformative
baselines deteriorate as well. Since the two spectral-type methods differ only
by \(\mathcal P_{\mathcal V_s}\), their separation isolates the benefit of
removing off-model components before TT-SVD.

\paragraph{Varying the number of measurements.}
We next vary the measurement budget as \(m=100,150,\ldots,1200\), while fixing
the tensor order at \(d=12\), the interaction order at \(s=3\), the number of
active components at \(k=3\), and the window step size at \(p=2\).

\begin{figure}[!htbp]
    \centering

    \begin{subfigure}[t]{0.32\textwidth}
        \centering
        \includegraphics[width=\linewidth]
        {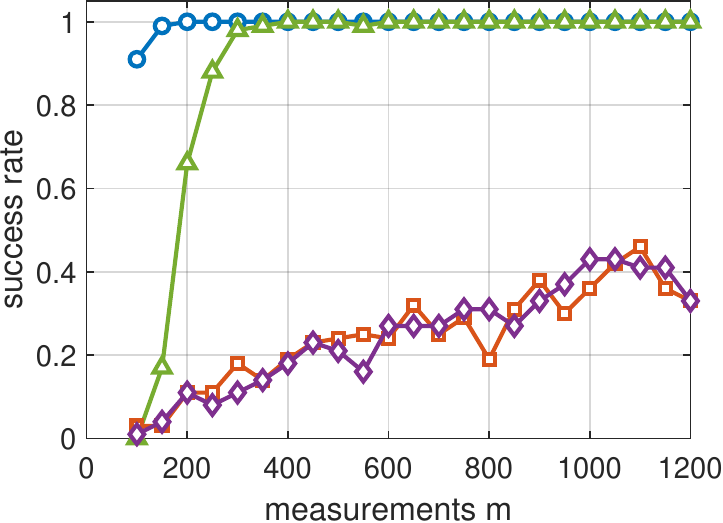}
        \caption{Recovery success rate.}
        \label{fig:recovery-measurement-success}
    \end{subfigure}
    \hfill
    \begin{subfigure}[t]{0.32\textwidth}
        \centering
        \includegraphics[width=\linewidth]
        {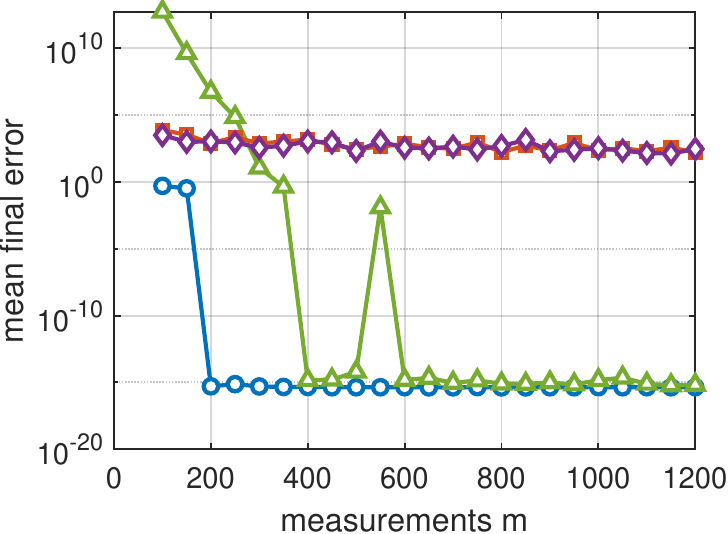}
        \caption{Mean final relative error.}
        \label{fig:recovery-measurement-error}
    \end{subfigure}
    \hfill
    \begin{subfigure}[t]{0.32\textwidth}
        \centering
        \includegraphics[width=\linewidth]
        {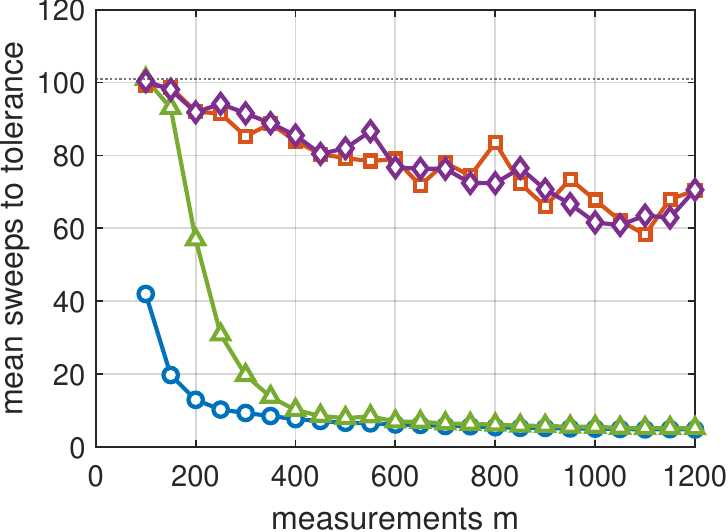}
        \caption{Mean sweeps to tolerance.}
        \label{fig:recovery-measurement-iterations}
    \end{subfigure}

    \caption{Full ALS recovery versus the number of measurements for \(d=12\),
    based on \(100\) trials. Initializers: structure-aware (blue circles),
    spectral (green triangles), random TT (orange squares), and near-zero
    (purple diamonds).}
    \label{fig:recovery-measurement}
\end{figure}

\FloatBarrier

The measurement sweep shows that the projection is most valuable in the
undersampled regime. The structure-aware method succeeds in every trial from
\(m=200\) onward, whereas the spectral initializer becomes comparably reliable
only around \(m=300\)--\(400\); the uninformative baselines remain unreliable
throughout. The isolated spikes in the mean final error of the spectral
initializer are caused by occasional failed trials: successful trials reach
near-machine precision, whereas a failed trial can retain a relative error
close to one and therefore dominate the mean. Once both spectral-type methods
converge, they achieve similar final accuracy with similar sweep counts. Thus,
the principal benefit of the structure-aware initializer is a higher
probability of convergence with fewer measurements, rather than greater
accuracy after convergence.

These conclusions are limited to the noiseless sliding-window model with fixed
supports and oracle TT ranks.

\section{Conclusion}
This work develops a compressed-sensing framework for length-$2^d$ signals
whose binary tensorizations combine bounded TT rank with sparse low-order
interaction structure. For randomly subsampled Walsh--Hadamard measurements,
we established a uniform RIP on this model class and used it to prove
uniqueness of noiseless recovery. We also introduced a structure-aware
initializer for fixed-rank ALS and proved a uniform initialization error bound
with a measurement requirement $\mathcal{O}(d^{s+1}\log d)$ for fixed
interaction order $s$, target accuracy, and failure probability. The experiments
support the predicted qualitative scaling, while showing milder dependence on
$d$, $s$, and $k$ than the worst-case bounds. Most importantly, the
structure-aware initializer yields lower initialization error and reliable ALS
recovery with fewer measurements than the standard spectral initializer.

The analysis suggests two concrete directions for sharper theory. For the
uniform RIP, a single-scale covering argument already gives a measurement bound
of comparable polylogarithmic order to general sparse RIP results. The milder
empirical dependence on $d$, $s$, and $k$ suggests that chaining or a tighter
model-specific covering argument could sharpen the remaining factors. For the
initializer, the current proof controls the projected backprojection over the
full interaction space $\mathcal{V}_s$ and then applies the generic TT-SVD
factor $1+\sqrt{d-1}$. In these experiments, TT-SVD acts as a denoising step.
Analyzing the spectra of the TT unfoldings of
$\mathcal{P}_{\mathcal{V}_s}(\mathcal{A}^*\mathbf{y})$ may lead to a sharper
post-TT-SVD error bound.

The concentration argument also suggests extensions to uniformly subsampled
orthonormal systems with bounded entries. One example is the
one-dimensional discrete Fourier transform applied to the ground-truth vector
$\mathbf{x}^\star$, with its Fourier coefficients sampled uniformly. The
present QTT formulation reshapes a finely discretized one-dimensional signal
of length $2^d$ into a $d$-mode tensor with binary
modes~\cite{khoromskij2011quantics,shinaoka2023multiscale}. Separately, tensors with
binary modes arise intrinsically in quantum many-body models, where low-rank
matrix product representations and few-body interactions suggest a natural,
though not automatic, connection to our joint structural
model~\cite{schollwock2011density,huang2023learning}. This suggests a possible direction for structure-aware recovery in physical inverse problems, but the present
evidence is limited to noiseless, exactly structured sliding-window tensors
with fixed supports and oracle TT ranks. Realizing this potential will require
verifying the simultaneous presence of low TT rank and sparse low-order
interactions, developing physically realizable measurement schemes, and
addressing noise, model mismatch, and unknown ranks.

\section*{Acknowledgments}
This work grew out of the author's master's thesis at the University of Chicago. The author thanks his supervisor, Yuehaw Khoo, for his guidance and many helpful discussions. The author used ChatGPT to assist with language editing and stylistic refinement of the manuscript. All mathematical content was independently reviewed and verified by the author.

\section*{Statements and Declarations}
\noindent\textbf{Funding.}
No funding was received for conducting this study.

\medskip
\noindent\textbf{Competing interests.}
The author has no relevant financial or non-financial interests to disclose.

\medskip
\noindent\textbf{Data availability.}
All data analyzed in this study were generated synthetically. The generated
data supporting the findings are included in the accompanying code repository
at \url{https://github.com/Jingchun-Shao/compressed_sensing_with_QTT}.

\medskip
\noindent\textbf{Code availability.}
The MATLAB code used to generate the data and figures and to perform the
reported recovery experiments is available at
\url{https://github.com/Jingchun-Shao/compressed_sensing_with_QTT}.

\medskip
\noindent\textbf{Author contributions.}
The author developed the methodology and software, performed the analysis and
experiments, and wrote and approved the manuscript.

\bibliographystyle{plain}
\bibliography{ref}

\end{document}